\let\oldtocsection=\tocsection
\let\oldtocsubsection=\tocsubsection 
\let\oldtocsubsubsection=\tocsubsubsection
\renewcommand{\tocsection}[2]{\vspace{0.5em}\hspace{0em}\oldtocsection{#1}{#2}}
\renewcommand{\tocsubsection}[2]{\vspace{0.5em}\hspace{1em}\oldtocsubsection{#1}{#2}}
\renewcommand{\tocsubsubsection}[2]{\vspace{0.5em}\hspace{2em}\oldtocsubsubsection{#1}{#2}}
\newtheorem{theoreme}{Theorem}[section]
\theoremstyle{definition}
\numberwithin{equation}{section}
\renewenvironment{proof}{{\bfseries \noindent Proof.}}{\demo}
\newcommand\xqed[1]{%
	\leavevmode\unskip\penalty9999 \hbox{}\nobreak\hfill
	\quad\hbox{#1}}
\newcommand\demo{\xqed{$\square$}}
\let\originallesssim\lesssim
\let\originalgtrsim\gtrsim
\DeclareRobustCommand{\lesssim}{%
	\mathrel{\mathpalette\lowersim\originallesssim}%
}
\DeclareRobustCommand{\gtrsim}{%
	\mathrel{\mathpalette\lowersim\originalgtrsim}%
}
\newcommand{\lowersim}[2]{%
	\sbox\z@{$#1<$}%
	\raisebox{-\dimexpr\height-\ht\z@}{$\m@th#1#2$}%
}
\def\R{\mathbb R}
\def\Z{\mathbb Z}
\def\N{\mathbb N}
\def\C{\mathbb C}
\def\HH{\mathcal H}
\def\AA{\mathcal A}
\def\la {{\lambda}}
\newcommand {\nc}   {\newcommand}
\nc {\be}   {\begin{equation}} \nc {\ee}   {\end{equation}} \nc
\nc {\eeq}  {\end{eqnarray}} \nc {\beqs}
\nc {\eeqs} {\end{eqnarray*}}
\def\edc{\end{document}}
\providecommand{\abs}[1]{\lvert#1\rvert}
\numberwithin{equation}{section}
\theoremstyle{Thm}
\newtheorem{Thm}{Theorem}[section]
\newtheorem{lem}{Lemma}[section]
\newtheorem{prop}{Proposition}[section]
\newtheorem{rk}{Remark}[section]
\definecolor{carnelian}{rgb}{0.7, 0.11, 0.11}
\definecolor{carmine}{rgb}{0.59, 0.0, 0.09}
\definecolor{burgundy}{rgb}{0.5, 0.0, 0.13}
\definecolor{darkmidnightblue}{rgb}{0.0, 0.2, 0.4}
\definecolor{dimgray}{rgb}{0.75, 0.75, 0.75}
\definecolor{palecarmine}{rgb}{0.69, 0.25, 0.21}
\numberwithin{dummy}{section}
\numberwithin{equation}{section}
\def\AA{\mathcal A}
\def\HH{\mathbf{\mathcal H}}
\newcommand{\h}{\mathsf{h}}
\newcommand{\f}{\mathsf{f}}
\newcommand{\g}{\mathsf{f}_3}
\newcommand{\q}{\mathsf{g}}
\newcommand{\intdx}{\int_{0}^{L} }
\newcommand{\intdnb}{\int_{0 }^{\beta } }
\providecommand{\abs}[1]{\lvert#1\rvert}
\begin{document}
	\title[\fontsize{7}{9}\selectfont  ]{The influence of the physical coefficients of a Bresse system with one singular local viscous damping in the longitudinal displacement on its stabilization
	}
\author{Mohammad Akil$^{1}$}
\author{Haidar Badawi$^{2}$}
\address{$^1$ Universit\'e Savoie Mont Blanc, Laboratoire LAMA, Chamb\'ery-France}
\address{$^2$ Universit\'e Polytechnique Hauts-de-France (UPHF-LAMAV),
	Valenciennes, France}
\email{mohammad.akil@univ-smb.fr, Haidar.Badawi@etu.uphf.fr.}
\keywords{Bresse system; Frictional damping;  Strong stability; Exponential stability; Polynomial stability; Frequency domain approach}
\begin{abstract}
	In this paper, we investigate the stabilization of a linear Bresse system with one singular local frictional damping  acting in the longitudinal displacement, under fully Dirichlet boundary conditions. First, we prove the strong stability of our system. Next, using a frequency domain approach combined with the multiplier method, we establish the exponential stability of the solution if and only if the three waves have the same speed of propagation. On the contrary,  we prove that the energy of our system  decays polynomially with rates $t^{-1}$ or $t^{-\frac{1}{2}}$.
	\end{abstract}
\maketitle
\pagenumbering{roman}
\maketitle
\tableofcontents
\pagenumbering{arabic}
\setcounter{page}{1}
\newpage
\section{Introduction}
\noindent In this paper, we investigate the stability of Bresse system with one discontinuous local frictional damping in the longitudinal displacement. More precisely, we consider the following system:
\begin{equation}\label{p3-sysorig}
\left\{	\begin{array}{llll}
	\displaystyle \rho_1 \varphi_{tt}-k_1 (\varphi_x+\psi+lw)_x -lk_3 (w_x-l\varphi)=0, &(x,t)\in  (0,L) \times (0,\infty),&\vspace{0.15cm}\\
	\displaystyle \rho_2 \psi_{tt}-k_2 \psi_{xx} +k_1 (\varphi_x +\psi+lw)=0,&(x,t)\in  (0,L) \times (0,\infty),&\vspace{0.15cm}\\
\displaystyle \rho_1 w_{tt}-k_3 (w_x-l\varphi) _x+lk_1 (\varphi_x+\psi+lw)+a(x)w_t=0, &(x,t)\in  (0,L) \times (0,\infty),
	\end{array}
	\right.
\end{equation}
with the following Dirichlet boundary conditions 
\begin{equation}
	\varphi(0,t)=\varphi(L,t)=\psi(0,t)=\psi(L,t)=w (0,t)=w(L,t)=0, \ \ t>0.
\end{equation}
and the following initial conditions
\begin{equation}\label{p3-initialcond}
\left\{\begin{array}{lll}
\displaystyle 	\varphi(x,0)=\varphi_0(x), \ \varphi_t (x,0)=\varphi_1(x), \ \psi(x,0)=\psi_0(x),  \ x\in(0,L),\vspace{0.15cm}\\
\displaystyle  \psi_t(x,0)=\psi_1(x), \ w(x,0)=w_0(x), \ w_t (x,0)=w_1(x), \ x\in(0,L),
	\end{array}
	\right.
\end{equation}
where $\rho_1, \rho_2, k_1, k_2, k_3, l $ and $L$ are  positive real numbers. We suppose that there exists $0<\beta<L$ and a positive constant $a_0$ such that 
\begin{equation}\label{p3-a}
	a(x)=\left\{\begin{array}{lll}
	a_0 & \text{if} & x\in (0,\beta),\vspace{0.15cm}\\
		0 & \text{if} & x\in (\beta,L).
	\end{array}
	\right.
\end{equation}
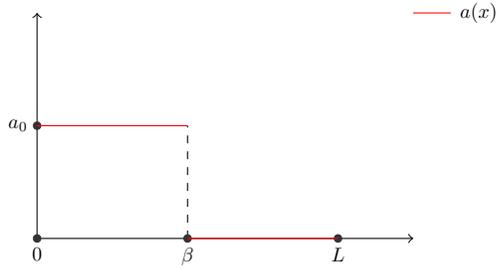
\begin{figure}[h!]
	\begin{center}
		\begin{tikzpicture}
		\draw[->](0,0)--(5,0);
		\draw[->](0,0)--(0,3);

		\draw[dashed](2,0)--(2,1.5);

		\node[black,below] at (2,0){\scalebox{0.75}{$\beta$}};
		\node at (2,0) [circle, scale=0.3, draw=black!80,fill=black!80] {};

		\node[black,below] at (4,0){\scalebox{0.75}{$L $}};
		\node at (4,0) [circle, scale=0.3, draw=black!80,fill=black!80] {};

		
		\node[black,below] at (0,0){\scalebox{0.75}{$0$}};
		\node at (0,0) [circle, scale=0.3, draw=black!80,fill=black!80] {};
		
		\node at (0,1.5) [circle, scale=0.3, draw=black!80,fill=black!80] {};
		\node[black,left] at (0,1.5){\scalebox{0.75}{$a_0$}};

		\node[black,right] at (5.5,3){\scalebox{0.75}{$a(x)$}};

		\draw[-,red](5,3)--(5.5,3);
		\draw[-,red](0,1.5)--(2,1.5);
		\draw[-,red](2,0)--(4,0);

		\end{tikzpicture}
	\end{center}
	\caption{Geometric description of the function $a(x)$.}\label{p3-Fig1}
\end{figure}\\

 The notion of indirect damping mechanisms has been introduced by Russell in \cite{Russell01} and since this time, it retains the attention of many authors. In particular, the fact that only one equation of the coupled system is damped refers to the so-called class of "indirect" stabilization problems initiated and studied in \cite{Alabau04,Alabau02,Alabau05} and further studied by many authors, see for instance  \cite{Alabau06,LiuRao01,ZhangZuazua01} and the rich references therein.\\\linebreak
 
The Bresse system  is a model for arched beams, see \cite[Chap. 6]{LagneseLeugering01}. It can be expressed by the equations of motion:
\begin{equation}\label{p4-1.5}
\left\{\begin{array}{lll}
\displaystyle	\rho_1 \varphi_{tt}=Q_x +lN,\vspace{0.15cm}\\
\displaystyle	\rho_2 \psi_{tt}=M_x -Q,\vspace{0.15cm}\\
\displaystyle	\rho_1 w_{tt}=N_x -lQ-a(x)\omega_t,
\end{array}
\right.
\end{equation}
where  $N=k_3 (w_x-l\varphi)$ is the axial force, $Q=k_1 (\varphi_x+\psi+lw)$ is the shear force, and $M=k_2 \psi_{x}$ is the bending moment.
The functions $\varphi$, $\psi$, and $w$ are respectively the vertical, shear angle, and longitudinal displacements.
Here $\rho_1 =\rho A$, $\rho_2 =\rho I$, $k_1 =kGA$, $k_3 =EA$, $k_2 =EI$ and $l=R^{-1}$, in which  $\rho$ is the density of the material, 
$E$ the modulus of the elasticity, $G$ the shear modulus, $k$ the shear factor,  $A$ the cross-sectional area,  $I$ the second moment of area of the cross section, $R$ the radius of the curvature, and $l$ the curvature. Moreover, $F_1$, $F_2$, and $F_3$ are the external forces.
\\ \linebreak  
There are several publications concerning the stabilization of Bresse system with frictional or another kinds of damping (see  \cite{Wehbe08}, \cite{doi:10.1002/mma.6070}, \cite{ALABAUBOUSSOUIRA2011481}, \cite{https://doi.org/10.1002/mma.3115}, \cite{deLima2018}, \cite{ElArwadi2019}, \cite{doi:10.1080/00036811.2018.1520982}, \cite{FATORI2012600}, \cite{10.1093/imamat/hxq038}, \cite{Guesmia2017}, \cite{GHOUL20171870}, \cite{doi:10.1002/mma.3228}, \cite{Rivera2019},  \cite{RaoLiu03}, \cite{Wehbe03}, \cite{Wehbe02}, \cite{SORIANO2014369} and \cite{Wehbe01}). We note that by neglecting $w$ ($l \to 0$) in \eqref{p4-1.5}, the Bresse system  reduces to the following conservative Timoshenko system: 
\begin{equation*}
\begin{array}{lll}
\rho_1 \varphi_{tt}-k_1 (\varphi_x+\psi)_x=0,\vspace{0.25cm}\\
\rho_2 \psi_{tt}-k_2 \psi_{xx}+k_1 (\varphi_x+\psi)=0.
\end{array}
\end{equation*}
There are also several publications concerning the stabilization of Timoshenko system  with different kinds of damping (see  \cite{Akil2020}, \cite{Alabau-Boussouira2007}, \cite{BASSAM20151177}, \cite{doi:10.1002/zamm.201500172} and  \cite{doi:10.1080/00036810903156149}).\\ \linebreak
Among this vast literature let us recall some specific results on the Bresse systems.\\\linebreak 

In 2010, Wehbe and Youssef in \cite{Wehbe01} studied the stability of an elastic Bresse system with two locally distributed frictional dampings on shear angle and longitudinal displacements, under fully Dirichlet or Dirichlet-Neumann-Neumann boundary conditions; they showed that the system is exponential stable if and only if the equations of the vertical displacement and rotation angle have the same wave speeds of propagation. In  case that the wave speeds of the equations are different, they obtained a polynomial decay rate. In 2011, Alabau {\it et al.}  in \cite{ALABAUBOUSSOUIRA2011481} studied the stability of a Bresse system with one frictional damping on the shear angle displacement, under fully Dirichlet or Dirichlet-Neumann-Neumann conditions; they showed that the system is exponential stable if and only if the three equations have the same wave speeds of propagation. On the contrary, they proved that the solution of the system decays polynomially  with rates $t^{-3+\epsilon}$ or $t^{-6+\epsilon}$, where $\epsilon>0$. In 2012, Noun and Wehbe in \cite{Wehbe02} studied the stability of a Bresse system with one local frictional damping on the shear angle displacement, under fully Dirichlet or Dirichlet-Neumann-Neumann boundary conditions; they showed that the system is exponential stable if and only if the three equations have the same wave speeds of propagation. On the contrary,  they proved that the energy of the system  decays polynomially with different rates. In 2013, Soriano {\it et al.} in \cite{SORIANO2014369} studied the asymptotic stability of a Bresse system with a nonlinear frictional damping on the shear angle displacement, and nonlinear localized damping in the vertical and longitudinal displacement; they proved the asymptotic stability of the system.  In 2015, Alves {\it et al} in  \cite{https://doi.org/10.1002/mma.3115}  studied the stability of a Bresse system with two frictional dampings on vertical and longitudinal displacements, under Dirichlet-Neumann-Neumann boundary conditions; they showed that the system is exponential stable if and only if the equations of the vertical displacement and longitudinal displacement have the same wave speeds of propagation. In  case that the wave speeds of the equations are different, they proved that the solution decays polynomially to zero with optimal decay rate.
 In 2018, Afilal {\it et al.} in \cite{doi:10.1002/mma.6070} studied the stability of a Bresse system with global frictional damping in the longitudinal displacement,  under mixed boundary conditions of the form
 $$
\left\{ \begin{array}{lll}
 \displaystyle \varphi(0,t)=\psi_x(0,t)=w_x (0,t)=0, \quad \text{in} \ \ (0,\infty),\vspace{0.15cm}\\  \displaystyle \varphi_x(1,t)=\psi(1,t)=w(1,t)=0,\ \, \quad \text{in} \ \ (0,\infty),
 \end{array}
 \right.
 $$ 
 they assumed that the curvature $l$ satisfies
\begin{equation*}\label{p3-1.7*}
l\neq \frac{\pi}{2}+m\pi, \quad \forall \,  m\in \N \ \ \text{and} \ \   l^2\neq \frac{\rho_2k_3+\rho_1k_2}{\rho_2 k_3}\left(\frac{\pi}{2}+m\pi\right)^2+\frac{\rho_1k_1}{\rho_2(k_1+k_3)}, \ \ \forall m\in \Z;
\end{equation*} 
they showed that the system is exponential stable if and only if the equations have the same wave speeds of propagation. In case that the wave speeds of the equations are different, they established a polynomial energy decay rate of order $t^{-\frac{1}{4}}$.\\\linebreak

 In this paper, we extend the results in \cite{doi:10.1002/mma.6070}, by assuming that the frictional damping is locally distributed in the longitudinal displacement, under fully Dirichlet boundary conditions and without any condition on the curvature $l$, we also improve the polynomial energy decay rate.\\\linebreak 
 
  But to the best of our knowledge, it seems that  no result in the literature exists concerning the case of Bresse system with one discontinuous local frictional damping in the longitudinal displacement, especially under fully Dirichlet boundary conditions and without any condition on the curvature $l$. The goal of the present paper is to fill this gap by studying the stability of system \eqref{p3-sysorig}-\eqref{p3-initialcond}.\\\linebreak
    
  This paper is organized as follows: In Section \ref{p3-WPS}, we prove the well-posedness of our system by using semigroup approach. In Section \ref{p3-sec3}, we show the strong stability of our system. Finally, in Section \ref{p3-secpoly}, by using the frequency domain approach  combining with a specific  multiplier method,  we establish the exponential stability of the solution if and only if the three waves have the same speed of propagation (i.e., $\frac{k_1}{\rho_1}=\frac{k_2}{\rho_2}$ and $k_1=k_3$). On the contrary, we prove that the energy of our system  decays polynomially with the rates:
  \begin{equation*}
  \left\{	\begin{array}{lll}
  \displaystyle	t^{-1} \quad \text{if} \quad \frac{k_1}{\rho_1}=\frac{k_2}{\rho_2} \ \ \text{and} \ \ k_1\neq k_3,\vspace{0.15cm}\\
 \displaystyle 		t^{-\frac{1}{2}} \quad \text{if} \quad \frac{k_1}{\rho_1}\neq\frac{k_2}{\rho_2} .
  	\end{array}
  	\right.
  \end{equation*}

\section{Well-posedness of the system}\label{p3-WPS}
\noindent In this section,  we will establish the well-posedness of system \eqref{p3-sysorig}-\eqref{p3-initialcond} by using semigroup approach.
The energy of system \eqref{p3-sysorig}-\eqref{p3-initialcond} is given by 
$$
\begin{array}{lll}
\displaystyle E(t)=\frac{1}{2}\intdx \left(\rho_1 \left|\varphi_t\right|^2+\rho_2|\psi_t|^2 +\rho_1 |w_t|^2 +k_1 |\varphi_x+\psi+lw|^2 +k_2 |\psi_x|^2 +k_3 |w_x -l\varphi|^2\right) dx.
\end{array}  
$$
Let $(\varphi,\varphi_t,\psi,\psi_t,w,w_t)$ be a regular solution of system \eqref{p3-sysorig}-\eqref{p3-initialcond}. Multiplying the equations in \eqref{p3-sysorig} by $\overline{\varphi_t}$, $\overline{\psi_t}$ and $\overline{w_t}$ respectively, Then using the boundary conditions \eqref{p3-bc} and the definition of $a(x)$ (see \eqref{p3-a} and Figure \ref{p3-Fig1}), we obtain  
\begin{equation}\label{p3-2.12}
	E^{\prime }(t)=-\intdx a(x) |w_{t}|^2 dx=-a_0\intdnb  |w_{t}|^2 dx \leq 0.
\end{equation}
From \eqref{p3-2.12}, system \eqref{p3-sysorig}-\eqref{p3-initialcond} is dissipative in the sense that its energy is non-increasing with respect to time. Now, we define the following Hilbert space $\HH$ by:
$$
\HH:=\left( H^1_0(0,L)\times L^2 (0,L)\right)^3.
$$
The Hilbert space $\HH$ is equipped with the following inner product
$$
\begin{array}{lll}
\displaystyle (U,U^1 )_{\HH}=\intdx \left\{ k_1(v^{1}_x+v^3+lv^5)\overline{(\widetilde{v^{1}_x}+\widetilde{v^3}+l\widetilde{v^5})}+\rho_1 v^2\overline{\widetilde{v^2}}+k_2 v^3_x \overline{\widetilde{v^3_x}} +\rho_2 v^4\overline{\widetilde{v^4}}\right.\vspace{0.25cm}
\\
\hspace{3cm}\displaystyle \left. +\, k_3 (v^5_x-lv^1)(\overline{\widetilde{v^5_x}-l\widetilde{v^1}})dx+\rho_1 v^6\overline{\widetilde{v^6}}\right\}dx,
\end{array}
$$
where  $U=(v^1,v^2,v^3,v^4,v^5,v^6)^{\top}\in \HH$ and $\widetilde{U} =(\widetilde{v^1}, \widetilde{v^1},\widetilde{v^2},\widetilde{v^3},\widetilde{v^4},\widetilde{v^5}, \widetilde{v^6} )^{\top}\in\HH$. Now, we define the linear unbounded  operator $\AA:D(\AA)\subset \HH\longmapsto \HH$  by:
\begin{equation}
D(\AA)=\left[\left(H^2(0,L)\cap H^1_0(0,L)\right) \times H^1_0 (0,L)\right]^3
\end{equation}and 
\begin{equation}\label{p2-op}
\AA\begin{pmatrix}
v^1\\v^2\\v^3\\v^4\\v^5\\v^6
\end{pmatrix}=
\begin{pmatrix} 
v^2\\\displaystyle  \frac{k_1}{\rho_1}(v^{1}_x+v^3+lv^5 )_x+\frac{lk_3}{\rho_1}(v^{5}_x-lv^1 )\\\displaystyle v^4\\\displaystyle \frac{k_2}{\rho_2}v^{3}_{xx}  -\frac{k_1}{\rho_2}(v^{1}_x+v^3+lv^5 )\\\displaystyle v^6 \vspace{0.15cm}
\\\displaystyle \frac{k_3}{\rho_1}(v^{5}_x-lv^1 ) _x-\frac{lk_1}{\rho_1}(v^{1}_x+v^3+lv^5)-\frac{a(x)}{\rho_1 }v^6
\end{pmatrix},
\end{equation} 
for all $U=(v^1,v^2,v^3,v^4,v^5,v^6)^\top \in D(\AA)$.\\ 
In this sequel, $\|\cdot\|$ will denote the usual norm of $L^2 (0,L)$.\\
\linebreak 
Now, if $U=(\varphi, \varphi_t,\psi,\psi_t,w,w_t)^{\top}$, then system \eqref{p3-sysorig}-\eqref{p3-initialcond} can be written as the following first order evolution equation 
\begin{equation}\label{p3-firstevo}
U_t =\AA U , \quad U(0)=U_0,
\end{equation}
where  $U_0 =(\varphi_0 ,\varphi_1,\psi_0,\psi_1,w_0,w_1 )^{\top}\in \HH$.
\begin{prop}\label{p3-mdissip}
	{\rm The unbounded linear operator $\AA$ is m-dissipative in the Hilbert space $\HH$.}
\end{prop}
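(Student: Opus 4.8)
The plan is to invoke the Lumer--Phillips theorem, for which it suffices to check that $\AA$ is dissipative and that $I-\AA$ maps $D(\AA)$ onto $\HH$; since $D(\AA)$ is dense in $\HH$, these two properties give that $\AA$ generates a $C_0$-semigroup of contractions, i.e.\ that $\AA$ is m-dissipative.

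First I would verify dissipativity. Taking $U=(v^1,\dots,v^6)^\top\in D(\AA)$, I compute $\mathrm{Re}\,(\AA U,U)_{\HH}$ from the definitions of the inner product and of $\AA$. After inserting the first, third and fifth components and integrating by parts the three second-order terms---the boundary terms vanishing because $v^2,v^4,v^6\in H^1_0(0,L)$---the contributions regroup around the three quantities $v^1_x+v^3+lv^5$, $v^3_x$ and $v^5_x-lv^1$. Each group takes the antisymmetric form $z'\overline{z}-z\overline{z'}$ and is therefore purely imaginary, so its real part vanishes. The only surviving term is the damping term, and one obtains
\[
\mathrm{Re}\,(\AA U,U)_{\HH}=-\intdx a(x)\,|v^6|^2\,dx=-a_0\intdnb|v^6|^2\,dx\le 0,
\]
in agreement with the energy identity \eqref{p3-2.12}. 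Hence $\AA$ is dissipative.

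Next I would establish maximality by solving $(I-\AA)U=F$ for an arbitrary $F=(f^1,\dots,f^6)^\top\in\HH$. Writing the six scalar equations, the three velocity relations yield $v^2=v^1-f^1$, $v^4=v^3-f^3$ and $v^6=v^5-f^5$, reducing the problem to a coupled second-order system for $(v^1,v^3,v^5)$ with Dirichlet conditions. I would recast this system weakly on $V:=\big(H^1_0(0,L)\big)^3$: multiplying the three reduced equations by $\rho_1\overline{\phi^1}$, $\rho_2\overline{\phi^3}$ and $\rho_1\overline{\phi^5}$ respectively, integrating by parts and summing, one gets a sesquilinear form $B(\cdot,\cdot)$ on $V\times V$ whose diagonal value reproduces exactly the energy structure
\[
B(U,U)=\intdx\Big(\rho_1|v^1|^2+\rho_2|v^3|^2+(\rho_1+a(x))|v^5|^2+k_1|v^1_x+v^3+lv^5|^2+k_2|v^3_x|^2+k_3|v^5_x-lv^1|^2\Big)\,dx.
\]
Boundedness of $B$ on $V\times V$ and of the antilinear functional built from $F$ is immediate. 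The (complex) Lax--Milgram theorem then provides a unique weak solution $(v^1,v^3,v^5)\in V$; elliptic regularity upgrades it to $\big(H^2(0,L)\cap H^1_0(0,L)\big)^3$, and since the recovered $v^2,v^4,v^6$ lie in $H^1_0(0,L)$, we conclude $U\in D(\AA)$ with $(I-\AA)U=F$.

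The main obstacle is the coercivity of $B$ on $V$: every term in the displayed expression for $B(U,U)$ is nonnegative, but the gradients $v^1_x$ and $v^5_x$ are not controlled directly, only through the couplings $v^1_x+v^3+lv^5$ and $v^5_x-lv^1$. I would handle this by chaining the estimates. The term $k_2\norm{v^3_x}^2$ controls $\norm{v^3}_{H^1_0}$ through Poincar\'e's inequality; then $k_3\norm{v^5_x-lv^1}^2$ together with the zeroth-order control of $\norm{v^1}$ controls $\norm{v^5_x}$; and finally $k_1\norm{v^1_x+v^3+lv^5}^2$ together with the control of $\norm{v^3}$ and $\norm{v^5}$ controls $\norm{v^1_x}$. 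This yields $B(U,U)\ge c\,\norm{U}_V^2$ for some $c>0$, which secures coercivity and hence the surjectivity of $I-\AA$, completing the argument.
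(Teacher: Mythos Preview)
Your argument is correct. The dissipativity computation matches the paper's identity \eqref{p3-Reau}, and your Lax--Milgram setup for $(I-\AA)U=F$ is sound: the diagonal value $B(U,U)$ you display is indeed what one obtains, and your chained estimate for coercivity works precisely because the identity contributes the zeroth-order terms $\rho_1\|v^1\|^2+\rho_2\|v^3\|^2+(\rho_1+a)\|v^5\|^2$, which let you peel off $\|v^5_x\|$ and $\|v^1_x\|$ from the coupled quantities without any spectral argument.

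The paper takes a slightly different route: it solves $-\AA U=F$ (i.e.\ proves $0\in\rho(\AA)$) via Lax--Milgram, where the sesquilinear form satisfies $\mathcal{B}(v,v)=k_1\|v^1_x+v^3+lv^5\|^2+k_2\|v^3_x\|^2+k_3\|v^5_x-lv^1\|^2$, which is exactly the Bresse energy norm on $(H^1_0)^3$; coercivity there relies on the (standard but less elementary) fact that this energy norm is equivalent to the usual $(H^1_0)^3$ norm. The paper then invokes openness of $\rho(\AA)$ to obtain $R(\lambda I-\AA)=\HH$ for small $\lambda>0$. Your approach is more direct for Lumer--Phillips and makes coercivity transparent; the paper's approach has the side benefit of establishing $0\in\rho(\AA)$, which is reused in Section~\ref{p3-sec3} for strong stability. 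A minor cosmetic point: after your substitution the reduced equations already carry $\rho_1,\rho_2,\rho_1$, so the test functions should simply be $\overline{\phi^1},\overline{\phi^3},\overline{\phi^5}$ rather than $\rho_1\overline{\phi^1}$ etc.
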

\begin{proof}
For all $U=(v^1,v^2,v^3,v^4,v^5,v^6)^\top \in D(\AA)$, we have
\begin{equation}\label{p3-Reau}
	\Re (\AA U,U)_{\HH}=-\intdx a(x)\left|v^6\right|^2 dx = -a_0 \intdnb \left|v^6\right|^2 dx \leq 0.
\end{equation}
which implies that $\AA$ is dissipative. Let us prove that $\AA$ is maximal. For this aim, let $F=(f^1,f^2,f^3,f^4,f^5,f^6)^{\top}\in\HH$, we look for $U=(v^1,v^2,v^3,v^4,v^5,v^6)^{\top}\in D(\AA)$ unique solution of 
\begin{equation}\label{p3-AU=F}
	-\AA U=F.
\end{equation}
Detailing \eqref{p3-AU=F}, we obtain 
\begin{eqnarray}
-v^2&=&f^1,\label{p3-f1}\\
-k_1\left(v^1_x+v^3+lv^5 \right)_x -lk_3(v^5_x-lv^1)&=&\rho_1f^2,\label{p3-f2}\\
-v^4&=&f^3,\label{p3-f3}\\
	-k_2v^{3}_{xx} +k_1(v^1_x+v^3+lv^5)&=&\rho_2 f^4,\label{p3-f4}\\
	-v^6&=&f^5,\label{p3-f5}\\
-k_3\left(v^5_x-lv^1\right)_x +lk_1(v^1_x+v^3+lv^5)+a(x)v^6 &=&\rho_1 f^6,\label{p3-f6}
\end{eqnarray}
with the following boundary conditions
\begin{equation}\label{p3-bc}
	v^1 (0)=v^1(L)=v^3(0)=v^3(L)=v^5(0)=v^5(L)=0.
\end{equation}
Inserting \eqref{p3-f5} in \eqref{p3-f6}, we obtain
\begin{eqnarray}
-k_3\left(v^5_x-lv^1\right)_x +lk_1(v^1_x+v^3+lv^5)&=&\rho_1f^6+a(x)f^5.\label{p3-f9}
\end{eqnarray}
	Let $(\phi^1 ,\phi^2,\phi^3) \in \left(H^{1}_0 (0,L)\right)^3$. Multiplying  \eqref{p3-f2}, \eqref{p3-f4}  and \eqref{p3-f9} by $\overline{\phi^1}$, $\overline{\phi^2}$ and $\overline{\phi^3}$ respectively, integrating over $(0,L)$, then using formal integrations by parts, we obtain
	\begin{equation}\label{p3-vf}
	\mathcal{B}((v^1,v^3,v^5),(\phi^1,\phi^2,\phi^3))=\mathcal{L}((\phi^1,\phi^2,\phi^3)), \ \ \forall (\phi^1,\phi^2,\phi^3)\in \left(H^1_0 (0,L)\right)^3,
	\end{equation}
	where
$$
\begin{array}{lll}
&&\displaystyle 	\mathcal{B}((v^1,v^3,v^5),(\phi^1,\phi^2,\phi^3))=\displaystyle k_1\intdx (v^1_x+v^3 +lv^5)\overline{\phi^1_x }dx -lk_3\intdx (v^5_x -lv^1)\overline{\phi^1}dx \vspace{0.25cm}\\
&&\displaystyle  +\, k_2\intdx v^3_x \overline{\phi^2_x}dx  +k_1\int_{0}^L (v^1_x +v^3 +lv^5)\overline{\phi^2}dx 
  +k_3\intdx (v^5_x -lv^1)\overline{\phi^3_x}dx +lk_1\intdx (v^1_x+v^3 +lv^5)\overline{\phi^3}dx
\end{array}
$$
and
$$
\begin{array}{lll}
&&\displaystyle \mathcal{L}((\phi^1,\phi^2,\phi^3))=\rho_1\intdx f^2 \overline{\phi^1}dx+\rho_2\intdx f^4 \overline{\phi^2}dx+\rho_1\intdx f^6 \overline{\phi^3}dx+\intdx a(x)f^5 \overline{\phi^3}dx.
\end{array}
$$
It is easy to see that,  $\mathcal{B}$ is a sesquilinear, continuous and coercive form on $\left( H^{1}_0 (0,L)\right)^3  \times \left(H^{1}_0 (0,L)\right)^3 $ and $\mathcal{L}$ is a antilinear and continuous form on $\left( H^{1}_0 (0,L)\right)^3 $. Then, it follows by Lax-Milgram theorem that \eqref{p3-vf} admits a unique solution $(v^1,v^3,v^5)\in \left( H^{1}_0 (0,L)\right)^3 $. By taking test-functions 
$(\phi^1,\phi^2,\phi^3)\in \left(\mathcal{D} (0,L)\right)^3$, we see that
(\eqref{p3-f2}, \eqref{p3-f4}, \eqref{p3-f9}, \eqref{p3-bc}) hold in the distributional sense, from which we deduce that    $(v^1,v^3,v^5)\in  \left(H^2 (0,L)\cap H^{1}_0 (0,L)\right)^3$. Consequently,  $U=(v^1,-f^1,v^3,-f^3,v^5,-f^5)^{\top} \in D(\AA) $ is a unique solution of \eqref{p3-AU=F}. Then, $\mathcal{A}$ is an isomorphism and since $\rho\left(\mathcal{A}\right)$ is open set of $\mathbb{C}$ (see Theorem 6.7 (Chapter III) in \cite{Kato01}),  we easily get $R(\lambda I -\mathcal{A}) = {\mathcal{H}}$ for a sufficiently small $\lambda>0 $. This, together with the dissipativeness of $\mathcal{A}$, imply that   $D\left(\mathcal{A}\right)$ is dense in ${\mathcal{H}}$   and that $\mathcal{A}$ is m-dissipative in ${\mathcal{H}}$ (see Theorems 4.5, 4.6 in  \cite{Pazy01}). The proof is thus complete.
\end{proof}\\\linebreak
According to Lumer-Philips theorem (see \cite{Pazy01}), Proposition \ref{p3-mdissip} implies that the operator $\AA$ generates a $C_{0}$-semigroup of contractions $e^{t\AA}$ in $\HH$ which gives the well-posedness of \eqref{p3-firstevo}. Then, we have the following result:
\begin{Thm}{\rm
For all $U_0 \in \HH$,  system \eqref{p3-firstevo} admits a unique weak solution $$U(t)=e^{t\AA}U_0 \in C^0 (\R^+ ,\HH).
	$$ Moreover, if $U_0 \in D(\AA)$, then the system \eqref{p3-firstevo} admits a unique strong solution $$U(t)=e^{t\AA}U_0 \in C^0 (\R^+ ,D(\AA))\cap C^1 (\R^+ ,\HH).$$}
\end{Thm}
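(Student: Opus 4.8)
The plan is to obtain the statement as a direct consequence of semigroup theory, since the only substantive analytic work---the m-dissipativity of $\AA$---has already been established in Proposition \ref{p3-mdissip}. That proposition shows in particular that $\AA$ is dissipative, that $R(\lambda I - \AA) = \HH$ for a suitable $\lambda > 0$, and (via Theorems 4.5 and 4.6 of \cite{Pazy01}) that $D(\AA)$ is dense in $\HH$. These are precisely the hypotheses of the Lumer--Phillips theorem, so I would first invoke it (as recorded in \cite{Pazy01}) to conclude that $\AA$ generates a $C_0$-semigroup of contractions $(e^{t\AA})_{t \geq 0}$ on $\HH$.

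For $U_0 \in D(\AA)$, I would then appeal to the standard regularity theorem for generators in \cite{Pazy01}: the orbit $t \mapsto e^{t\AA}U_0$ remains in $D(\AA)$ for all $t \geq 0$, belongs to $C^0(\R^+, D(\AA)) \cap C^1(\R^+, \HH)$, and satisfies $\frac{d}{dt} e^{t\AA}U_0 = \AA e^{t\AA}U_0$ with initial value $U_0$. Hence $U(t) = e^{t\AA}U_0$ is a strong solution of \eqref{p3-firstevo}. For arbitrary $U_0 \in \HH$, the strong continuity of the semigroup already gives $U(t) = e^{t\AA}U_0 \in C^0(\R^+, \HH)$, which is the asserted weak (mild) solution.

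Uniqueness in both settings follows from linearity together with the contraction property. If $U$ and $\widetilde U$ were two solutions sharing the same initial datum, their difference $W = U - \widetilde U$ solves $W_t = \AA W$ with $W(0) = 0$; the dissipativity estimate \eqref{p3-Reau} then yields $\frac{d}{dt}\|W(t)\|_{\HH}^2 = 2\,\Re(\AA W(t), W(t))_{\HH} \leq 0$, so that $\|W(t)\|_{\HH} \leq \|W(0)\|_{\HH} = 0$ for all $t \geq 0$. I do not expect any genuine obstacle here: once Proposition \ref{p3-mdissip} is in hand, the entire statement is a citation of the Hille--Yosida/Lumer--Phillips machinery, and the only point requiring a moment's care is reading off correctly from that proposition that all three hypotheses of the generation theorem---dissipativity, the range condition, and density of the domain---are satisfied.
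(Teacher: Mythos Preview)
Your proposal is correct and follows exactly the paper's approach: the paper itself does not give a separate proof of this theorem but simply states (just before the theorem) that it follows from the Lumer--Phillips theorem in \cite{Pazy01} together with Proposition~\ref{p3-mdissip}. Your write-up merely spells out in slightly more detail the standard consequences of generation (regularity of orbits, uniqueness via contractivity), all of which are routine citations from \cite{Pazy01}.
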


\section{Strong Stability}\label{p3-sec3}
\noindent In this section, we will prove the strong stability of  system \eqref{p3-sysorig}-\eqref{p3-initialcond}. The main result of this section is the following theorem.
\begin{theoreme}\label{p3-strongthm2}
	{\rm	The $C_0-$semigroup of contraction $\left(e^{t\AA}\right)_{t\geq 0}$ is strongly stable in $\HH$; i.e., for all $U_0\in \HH$, the solution of \eqref{p3-firstevo} satisfies 
		$$
		\lim_{t\rightarrow +\infty}\|e^{t\AA}U_0\|_{\HH}=0.
		$$}
\end{theoreme}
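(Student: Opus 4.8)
The plan is to apply the Arendt--Batty criterion: since $(e^{t\AA})_{t\geq 0}$ is a contraction semigroup generated by the m-dissipative operator $\AA$ (Proposition \ref{p3-mdissip}), strong stability follows once one checks that $\sigma(\AA)\cap i\R=\emptyset$. First I would record that the embedding $D(\AA)\hookrightarrow\HH$ is compact, since it reduces to the Rellich embeddings $H^2(0,L)\cap H^1_0(0,L)\hookrightarrow H^1_0(0,L)$ and $H^1_0(0,L)\hookrightarrow L^2(0,L)$ on the bounded interval. As $0\in\rho(\AA)$ by the isomorphism established in the proof of Proposition \ref{p3-mdissip}, the resolvent $\AA^{-1}$ is compact, so $\sigma(\AA)$ consists only of isolated eigenvalues of finite multiplicity; in particular $0$ is not an eigenvalue, and it remains to show that the eigenvalue problem $\AA U=i\lambda U$ has no nontrivial solution for any real $\lambda\neq 0$.

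So I would take $\lambda\in\R$ with $\lambda\neq0$ and $U=(v^1,\dots,v^6)^\top\in D(\AA)\setminus\{0\}$ satisfying $\AA U=i\lambda U$. Taking the real part of $(\AA U,U)_\HH=i\lambda\|U\|_\HH^2$ and invoking the dissipation identity \eqref{p3-Reau} gives $a_0\intdnb|v^6|^2\,dx=0$, so $v^6\equiv0$ on $(0,\beta)$. The first-order relations contained in $\AA U=i\lambda U$ read $v^2=i\lambda v^1$, $v^4=i\lambda v^3$, $v^6=i\lambda v^5$; since $\lambda\neq0$, this forces $v^5\equiv0$ on $(0,\beta)$, hence $v^5_x=v^5_{xx}=0$ there as well.

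The heart of the argument is then an ODE analysis on the damped region. Feeding $v^5\equiv0$ into the three second-order equations equivalent to $\AA U=i\lambda U$, the longitudinal equation collapses to the algebraic relation $(k_1+k_3)v^1_x+k_1v^3=0$, and eliminating $v^3$ from the other two reduces the system, on $(0,\beta)$, to the decoupled equation $k_3v^1_{xx}=(\rho_1\lambda^2-l^2k_3)v^1$ together with a scalar compatibility relation of the form $\big[k_3-\tfrac{k_1+k_3}{k_1}(\rho_2\lambda^2+k_2\mu)\big]v^1_x=0$, where $\mu=(\rho_1\lambda^2-l^2k_3)/k_3$. Away from the single exceptional frequency at which the bracket vanishes, this yields $v^1_x\equiv0$, and the Dirichlet condition $v^1(0)=0$ then gives $v^1\equiv0$, whence $v^3\equiv0$ on $(0,\beta)$; at the exceptional frequency the algebraic relation together with $v^3(0)=0$ forces $v^1_x(0)=0$, so $v^1$ solves $v^1_{xx}=\mu v^1$ with vanishing Cauchy data at $0$ and again $v^1\equiv0$. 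In every case $v^1=v^3=v^5\equiv0$ on $(0,\beta)$.

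Finally I would propagate this vanishing to the undamped part $(\beta,L)$, where $a\equiv0$. Rewriting the eigenvalue system there as a first-order linear ODE for $(v^1,v^1_x,v^3,v^3_x,v^5,v^5_x)$, the $H^2$-regularity makes this vector continuous at $x=\beta$, where it vanishes by the previous step; uniqueness for linear ODEs forces $U\equiv0$ on $(\beta,L)$, hence $U=0$, contradicting $U\neq0$. Therefore $i\R\cap\sigma(\AA)=\emptyset$ and Arendt--Batty yields the conclusion. I expect the main obstacle to be precisely the ODE analysis on $(0,\beta)$: the single scalar damping on $v^6$ must be leveraged, through the coupling, to annihilate all three fields $v^1,v^3,v^5$ there, and the resonant frequency at which the compatibility bracket degenerates has to be dispatched separately using the Dirichlet data $v^1(0)=v^3(0)=0$.
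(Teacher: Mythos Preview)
Your proposal is correct and follows essentially the same route as the paper: Arendt--Batty via compact resolvent, the dissipation identity to kill $v^6$ (hence $v^5$) on $(0,\beta)$, an ODE reduction on the damped interval to force $v^1=v^3=0$ there, and propagation to $(\beta,L)$ by writing the system as a first-order linear ODE with vanishing Cauchy data at $x=\beta$. The only cosmetic difference is in the ODE step on $(0,\beta)$: the paper solves $v^1_{xx}+sv^1=0$ explicitly in the three cases $s=0$, $s<0$, $s>0$ and uses $v^1(0)=v^3(0)=0$ to kill the constants, whereas you invoke an extra compatibility relation---note, however, that your ``exceptional frequency'' argument (i.e.\ $v^3(0)=0\Rightarrow v^1_x(0)=0$, then Cauchy uniqueness for $v^1_{xx}=\mu v^1$) already works for \emph{every} $\lambda$, so the bracket condition is in fact unnecessary.
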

\begin{proof} Since the resolvent of $\AA$ is compact in $\HH$, then according to Arendt-Batty theorem see (Page 837 in \cite{Arendt01}), system \eqref{p3-sysorig}-\eqref{p3-initialcond} is strongly stable if and only if $\AA$ doesn't have pure imaginary eigenvalues that is $\sigma (\AA)\cap i\R=\emptyset$. 
	From Proposition \ref{p3-mdissip}, we have $0\in \rho (\AA)$. We still need to show that $\sigma(\AA) \cap i\R^* =\emptyset$. For this aim, suppose by contradiction that there exists a real number $\la\neq0$ and $U=(v^1,v^2,v^3,v^4,v^5,v^6)^{\top}\in D(\AA)\backslash\{0\}$ such that 
	\begin{equation}\label{p2-AU=ilaU}
	\AA U=i\la U.
	\end{equation}Equivalently, we have the following system
	\begin{eqnarray}
	v^2=i\la v^1 \label{p3-f1ker},
	\\\displaystyle  \frac{k_1}{\rho_1}(v^{1}_x+v^3+lv^5 )_x+\frac{lk_3}{\rho_1}(v^{5}_x-lv^1 )=i\la v^2\label{p3-f2ker},
	\\\displaystyle v^4=i\la v^3\label{p3-f3ker},
	\\\displaystyle \frac{k_2}{\rho_2}v^{3}_{xx}  -\frac{k_1}{\rho_2}(v^{1}_x+v^3+lv^5 )=i\la v^4\label{p3-f4ker},
	\\\displaystyle v^6=i\la v^5 \label{p3-f5ker},\vspace{0.15cm}
	\\\displaystyle \frac{k_3}{\rho_1}(v^{5}_x-lv^1 )_x-\frac{lk_1}{\rho_1}(v^{1}_x+v^3+lv^5)+\frac{a(x)}{\rho_1}v^6=i\la v^6
	\vspace{0.15cm}\label{p3-f6ker}.
	\end{eqnarray} 
	From  \eqref{p3-Reau} and \eqref{p2-AU=ilaU}, we obtain 
	\begin{equation}\label{p2-dissi=0}
	0=\Re \left(i\la U,U\right)_\HH=\Re\left(\AA U,U\right)_{\HH}=-\intdx a(x)\left|v^6\right|^2 dx =-a_0\intdnb \left|v^6\right|^2 dx.
	\end{equation}
	Thus, we have
	\begin{equation}\label{p3-2.43}
v^6=0 \ \ \text{in} \ \ (0,\beta).
	\end{equation}
	From \eqref{p3-f5ker}, \eqref{p3-2.43} and the fact that $\la \neq 0$, we get 
	\begin{equation}\label{p3-3.10}
	v^5=0 \ \ \text{in} \ \ (0,\beta).
	\end{equation}
	Now, from \eqref{p3-2.43}, \eqref{p3-3.10}, the regularity of $v^5$ and $v^6$, and the definition of $a(x)$, system \eqref{p3-f1ker}-\eqref{p3-f6ker} implies

\begin{eqnarray}
\left(\la^2 -\frac{l^2k_3}{\rho_1}\right)v^1 +\frac{k_1}{\rho_1}(v^1_x+v^3)_x =0 \ \ \text{in} \ \ (0,\beta),\label{p3-2.45}\\ 
\left(\la^2 -\frac{k_1}{\rho_2}\right)v^3+\frac{k_2}{\rho_2}v^3_{xx}-\frac{k_1}{\rho_2}v^1_x=0 \ \ \text{in} \ \ (0,\beta),\label{p3-2.46}\\
\frac{k_1}{\rho_1}(v^1_x+v^3)=-\frac{k_3}{\rho_1}v^1_x, \ \ \text{in} \ \ (0,\beta).\label{p3-2.47}
\end{eqnarray}
Inserting \eqref{p3-2.47} in  \eqref{p3-2.45}, we obtain
\begin{equation}\label{p3-2.48}
sv^1 +v^1_{xx}=0 \ \ \text{in} \ \ (0,\beta),
\end{equation}
where $s=\dfrac{l^2k_3-\rho_1\la^2}{k_3}$. Let us introduce the following three cases.\\\linebreak
\textbf{Case 1:} If $\displaystyle \la^2=\frac{l^2k_3}{\rho_1}$. Then, from \eqref{p3-2.48}, we deduce that
\begin{equation}
	v^1(x)=c_1x+c_2 \ \ \text{in} \ \ (0,\beta), \quad   c_1,c_2 \in \C.
\end{equation}
Using the fact that $v^1(0)=0$, we get 
\begin{equation}\label{p3-2.49}
	c_2 =0 \ \ \text{and consequently} \ \ v^1(x)=c_1 x  \ \ \text{in} \ \ (0,\beta).
\end{equation}
Inserting \eqref{p3-2.49} in \eqref{p3-2.47}, we get 
\begin{equation}\label{p3-2.50}
	v^3(x)=-\left(1+\frac{k_3}{k_1}\right)c_1 \ \ \text{in} \ \ (0,\beta).
\end{equation}
Now, from \eqref{p3-2.49}, \eqref{p3-2.50} and the fact that $v^3(0)=0$, we get
\begin{equation}\label{p3-2.52}
	c_1=0, \ \ v^1=0 \ \text{in}\ (0,\beta) \ \ \text{and}  \ \ v^3=0 \ \text{in}\ (0,\beta).
\end{equation}
Thus, from \eqref{p3-f1ker}, \eqref{p3-f3ker}, \eqref{p3-2.43}, \eqref{p3-3.10}, \eqref{p3-2.52} and the fact that $\la \neq 0$, we obtain 
\begin{equation}\label{p3-3.19}
	U=0 \ \ \text{in} \ \ (0,\beta).
\end{equation}
Let $V=(v^1,v^1_x,v^3,v^3_x,v^5,v^5_x)^{\top}$. From \eqref{p3-2.52} and the regularity of $v^i $, $i\in \{1,3,5\}$ , we get $V(\beta)=0$. Now, by inserting \eqref{p3-f1ker}, \eqref{p3-f3ker} and \eqref{p3-f5ker} in \eqref{p3-f2ker}, \eqref{p3-f4ker} and \eqref{p3-f6ker} respectively, then system \eqref{p3-f1ker}-\eqref{p3-f6ker} can be written in $(\beta,L)$  as the following
\begin{equation}\label{p3-de}
V_x =A V \ \ \text{in}\ \ (\beta,L),
\end{equation}where
\begin{equation*}
A =  \begin{pmatrix}
0&1&0&0&0&0\\
0&0&0&1&0&l(1-\frac{k_3}{k_1})\\
0&0&0&1&0&0\\
0&-\frac{k_1}{k_2}&\frac{\rho_2\la^2-k_1}{k_2}&0&-\frac{lk_1}{k_2}&0\\
0&0&0&0&0&1\\
0&-l(\frac{k_1}{k_3}+1)&0&-l\frac{k_1}{k_3}&\frac{\rho_1\la^2-l^2k_1}{k_3}&0
\end{pmatrix}.
\end{equation*}The solution of the differential equation \eqref{p3-de} is given by
\begin{equation}\label{p3-solde}
V(x)=e^{A (x-\beta )}V (\beta ).
\end{equation}
Thus, from \eqref{p3-solde} and the fact that $V(\beta )=0$, we get 
\begin{equation}\label{p3-2.55}
V=0\ \ \text{in}\ \ (\beta ,L) \ \ \text{and consequently} \ U=0 \ \ \text{in}\ \ (\beta,L).
\end{equation}
Therefore, from \eqref{p3-3.19} and \eqref{p3-2.55}, we obtain 
$$
U=0 \ \ \text{in} \ \ (0,L).
$$
\textbf{Case 2:} If $\displaystyle \la^2 > \frac{l^2k_3}{\rho_1}$. Then, from \eqref{p3-2.48}, we deduce that 
\begin{equation}\label{p3-2.56}
	v^1(x)=c_1e^{\sqrt{-s}x}+c_2e^{-\sqrt{-s}x} \ \ \text{in} \ \ (0,\beta), \ \ c_1, c_2 \in \C.
\end{equation}
 Now, from \eqref{p3-2.56} and the fact that $v^1(0)=0$, we get 
\begin{equation}\label{p3-2.57}
c_2=-c_1 \ \ \text{and consequaently} \ \ v^1(x)=c_1(e^{\sqrt{-s}x}-e^{-\sqrt{-s}x}) \ \ \text{in} \ \ (0,\beta).
\end{equation}
Inserting \eqref{p3-2.57} in \eqref{p3-2.47}, we get 
\begin{equation}\label{p3-2.58}
	v^3(x) =-\left(1+\frac{k_3}{k_1}\right)\sqrt{-s}\left(e^{\sqrt{-s}x}+e^{-\sqrt{-s}x}\right)c_1 \ \ \text{in} \ \ (0,\beta).
\end{equation}
From \eqref{p3-2.57}, \eqref{p3-2.58} and the fact that $v^3(0)=0$, we obtain
\begin{equation}
c_1=0, \ \ v^1=0 \ \ \text{in} \ \ (0,\beta),\ \ v^3=0  \ \ \text{in} \ \ (0,\beta) \ \ \text{and consequently} \ \ U=0 \ \ \text{in} \ \ (0,\beta).
\end{equation}
Similarly to Case 1, we get $U=0$ in $(\beta,L)$ and consequently $U=0$ in $(0,L)$.\\\linebreak
\textbf{Case 3:} If $\displaystyle \la^2 < \frac{l^2k_3}{\rho_1}$. Then, from \eqref{p3-2.48}, we deduce that 
\begin{equation}\label{p3-2.60}
v^1(x)=c_1\cos(\sqrt{s}x)+c_2\sin(\sqrt{s}x) \ \ \text{in} \ \ (0,\beta), \ \ c_1, c_2 \in \C.
\end{equation}
 Now, from \eqref{p3-2.60} and the fact that $v^1(0)=0$, we get 
\begin{equation}\label{p3-2.61}
c_1=0 \ \ \text{and consequaently} \ \ v^1(x)=c_2\sin(\sqrt{s}x) \ \ \text{in} \ \ (0,\beta).
\end{equation}
Inserting \eqref{p3-2.61} in \eqref{p3-2.47}, we get 
\begin{equation}\label{p3-2.62}
v^3 (x)=-\left(1+\frac{k_3}{k_1}\right)\sqrt{s}\cos(\sqrt{s}x)c_2 \ \ \text{in} \ \ (0,\beta).
\end{equation}
From \eqref{p3-2.62}, \eqref{p3-2.61} and the fact that $v^3(0)=0$, we obtain
\begin{equation}
c_2=0, \ \ v^1=0 \ \ \text{in} \ \ (0,\beta),\ \ v^3=0 \ \ \text{in} \ \ (0,\beta) \ \ \text{and consequently} \ \ U=0 \ \ \text{in} \ \ (0,\beta).
\end{equation}
Similarly to Case 1, we get $U=0$ in $(\beta,L)$ and consequently $U=0$ in $(0,L)$. The proof is thus complete.
\end{proof}

\section{Exponential and Polynomial Stability }\label{p3-secpoly}
\noindent In this section, we show the influence of the physical coefficients on the stability of system \eqref{p3-sysorig}-\eqref{p3-initialcond}. The main results of this section are the following theorems. 
\begin{theoreme}\label{exps}{\rm
	 If
	 \begin{equation*}
	 	\frac{k_1}{\rho_1}=\frac{k_2}{\rho_2} \ \ \text{and} \ \ k_1=k_3,
	 \end{equation*} 
	 then the $C_0-$semigroup $e^{t\AA}$ is exponentially stable; i.e. there exists constants $M\geq 1$ and $\epsilon>0$ independent of $U_{0}$ such that 
	\begin{equation}\label{1}
		\|e^{t\AA}U_{0}\|_{\HH}\leq Me^{-\epsilon t}\|U_{0}\|_{\HH}.
	\end{equation}}
\end{theoreme}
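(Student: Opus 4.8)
The plan is to establish exponential stability via the frequency-domain (Gearhart--Huang--Pr\"uss) characterization: a bounded $C_0$-semigroup of contractions on a Hilbert space is exponentially stable if and only if \emph{(i)} $i\R\subset\rho(\AA)$ and \emph{(ii)} $\sup_{\lambda\in\R}\|(i\lambda I-\AA)^{-1}\|_{\mathcal L(\HH)}<\infty$. Condition \emph{(i)} is already available: Proposition \ref{p3-mdissip} gives $0\in\rho(\AA)$, while the proof of Theorem \ref{p3-strongthm2} shows $\sigma(\AA)\cap i\R=\emptyset$, and since the resolvent of $\AA$ is compact its spectrum consists only of eigenvalues, so $i\R\subset\rho(\AA)$. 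The entire burden therefore falls on the uniform resolvent bound \emph{(ii)}, and this is precisely where the equal-speed hypotheses $k_1/\rho_1=k_2/\rho_2$ and $k_1=k_3$ must be used.

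To prove \emph{(ii)} I would argue by contradiction. Its negation produces sequences $(\lambda_n)\subset\R$ and $U_n=(v^1_n,\dots,v^6_n)^\top\in D(\AA)$ with $\|U_n\|_\HH=1$ such that $F_n:=(i\lambda_n I-\AA)U_n\to 0$ in $\HH$. Because $\lambda\mapsto(i\lambda I-\AA)^{-1}$ is continuous on $\R$ by \emph{(i)}, hence bounded on compact sets, the frequencies must escape, $|\lambda_n|\to\infty$; this high-frequency regime is the one to analyze. Taking the real part of $(F_n,U_n)_\HH$ and invoking the dissipation identity \eqref{p3-Reau} yields the first, localized estimate
\begin{equation*}
a_0\intdnb |v^6_n|^2\,dx=\Re\,(F_n,U_n)_\HH\le \|F_n\|_\HH\,\|U_n\|_\HH=o(1),
\end{equation*}
so that, through the fifth resolvent equation $i\lambda_n v^5_n-v^6_n=f^5_n$, only $L^2(0,\beta)$ control of the damped velocity is available at the outset.

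The core of the argument is to upgrade this single localized estimate into $\|U_n\|_\HH=o(1)$, contradicting $\|U_n\|_\HH=1$. I would write the six scalar resolvent equations $i\lambda_n v^j_n-(\AA U_n)_j=f^j_n$ and proceed in two stages. First, on the damping strip $(0,\beta)$, I would test the equations against cutoff-localized multipliers (a smooth function supported in $(0,\beta)$ times $\overline{v^5_{n,x}}$ and $\overline{v^5_n}$, together with the coupling quantities $v^1_{n,x}+v^3_n+lv^5_n$ and $v^5_{n,x}-lv^1_n$) to turn the $L^2$-bound on $v^6_n$ into bounds on $v^5_{n,x}$ and, through the coupling, on $v^1_n,v^3_n$ and their derivatives over a subinterval. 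Second --- and this is where the hypotheses enter decisively --- under equal wave speeds the three second-order operators share a single characteristic speed, so a global multiplier of the form $p(x)\,\overline{v^j_{n,x}}$ with a suitable monotone weight $p$ makes the otherwise obstructive cross terms telescope: the boundary contributions at $x=L$ vanish by the Dirichlet conditions \eqref{p3-bc}, and the interior coupling terms cancel instead of generating a factor of $\lambda_n$. This propagates the bound across $(\beta,L)$ and controls all six components uniformly in $n$, forcing $\|U_n\|_\HH=o(1)$.

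The hard part is exactly this second stage. In the unequal-speed situation the same manipulation leaves a residual term scaling like a positive power of $\lambda_n$, which cannot be absorbed and is what degrades the decay from exponential to the polynomial rates announced for the complementary cases; the role of $k_1/\rho_1=k_2/\rho_2$ and $k_1=k_3$ is to annihilate that residue. Concretely, the delicate bookkeeping is to choose the weight $p$ and to combine the several multiplier identities with the correct $\rho_i,k_i$-weighted coefficients so that every term carrying a $\lambda_n$ either cancels against another or is dominated by already-established $o(1)$ quantities (among them $\|F_n\|_\HH$ and the interior bounds), leaving no residual $\lambda_n$-dependence. Once this cancellation is verified, collecting the estimates produces the contradiction, establishes \emph{(ii)}, and completes the proof.
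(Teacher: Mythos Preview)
Your overall architecture matches the paper exactly: Gearhart--Huang--Pr\"uss, the contradiction sequence with $|\lambda_n|\to\infty$, the dissipation estimate $\int_0^\beta|v^6_n|^2=o(1)$, local estimates on the damping strip via cut-off multipliers, and a global multiplier $\h(x)\,\overline{v^j_{n,x}}$ to propagate outward. The paper does precisely this, using nested cut-offs $\f_1,\dots,\f_4$ on shrinking subintervals of $(0,\beta)$ and then $\h=x\q_1+(x-L)\q_2$ for the propagation.

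The one point where your plan misfires is the \emph{location} of the equal-speed cancellation. You place it in the second (propagation) stage, saying the cross terms in the global multiplier identity telescope only under equal speeds. In fact the propagation identity the paper derives (equation \eqref{p3-4.41}) holds for \emph{all} coefficient choices: when you add the three identities obtained from multiplying \eqref{p4-4.18*}--\eqref{p4-4.18***} by $2\h\overline{v^1_x}$, $2\h\overline{v^3_x}$, $2\h\overline{v^5_x}$, the coupling cross terms cancel in real part regardless of $k_1,k_3,\rho_1,\rho_2$. What the propagation needs as \emph{input} is local $o(1)$ control of $\lambda v^1$, $v^1_x$, $\lambda v^3$, $v^3_x$ on a subinterval of $(0,\beta)$, and \emph{that} is where the hypotheses bite. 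Concretely: (i) to pass from $v^6=o(1)$ and $v^5_x=o(1)$ to $\lambda v^1,\,v^1_x=o(1)$ locally, the paper combines multipliers of \eqref{p4-4.18*} and \eqref{p4-4.18***} so that the dangerous terms $k_1\int\f_2 v^1_{xx}\overline{v^5_x}$ and $k_3\int\f_2 v^5_x\overline{v^1_{xx}}$ match only when $k_1=k_3$ (Lemma \ref{p4-lem4}); (ii) to then obtain $v^3_x=o(1)$ locally, the key identity \eqref{p3-4.23*} carries the residual term $(k_2/\rho_2-k_1/\rho_1)\int\f_j v^1_{xx}\overline{v^3_x}$, which is $O(|\lambda|)$ in general and vanishes exactly when $k_1/\rho_1=k_2/\rho_2$ (Lemma \ref{p3-3rdlem}). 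So if you execute your stage~1 as written---without invoking equal speeds---you will find you cannot get $o(1)$ local bounds on $v^1,v^3$ and there will be nothing for the propagation step to propagate. Move the equal-speed hypotheses into the local analysis and your plan goes through.
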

\begin{theoreme}\label{p3-pol-eq}{\rm
		 If \begin{equation*}
	\displaystyle \frac{k_1}{\rho_1}=\frac{k_2}{\rho_2} \ \ \text{and} \ \ k_1\neq k_3,
		 \end{equation*} 
		 then there exists $C>0$ such that for every $U_{0}\in D(\AA)$, we have 
	\begin{equation}\label{2}
		E(t)\leq \frac{C}{t}\|U_0\|^2_{D(\AA)},\quad t>0.
		\end{equation}}
\end{theoreme}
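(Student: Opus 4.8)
The plan is to derive \eqref{2} from a resolvent estimate by means of the Borichev--Tomilov theorem. Since Theorem~\ref{p3-strongthm2} shows $\sigma(\AA)\cap i\R=\emptyset$ and the resolvent of $\AA$ is compact (with $0\in\rho(\AA)$ by Proposition~\ref{p3-mdissip}), we have $i\R\subset\rho(\AA)$. The estimate \eqref{2} is equivalent to $\norm{e^{t\AA}U_0}_{\HH}\lesssim t^{-1/2}\norm{U_0}_{D(\AA)}$, so by the Borichev--Tomilov characterization of polynomial decay with exponent $\alpha=2$ it suffices to establish the resolvent bound
\[
\sup_{\la\in\R,\ |\la|\geq 1}\frac{1}{\la^2}\,\norm{(i\la I-\AA)^{-1}}_{\mathcal{L}(\HH)}<\infty .
\]

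I would prove this by contradiction. Assume the bound fails; then there exist $(\la_n)\subset\R$ with $|\la_n|\to\infty$ and $U_n=(v^1,\dots,v^6)^\top\in D(\AA)$ with $\norm{U_n}_{\HH}=1$ such that $\la_n^2(i\la_n I-\AA)U_n=:\la_n^2 F_n\to 0$ in $\HH$, where $F_n=(f^1,\dots,f^6)^\top$. Taking the real part of the identity obtained by pairing $(i\la_n I-\AA)U_n$ with $U_n$ and using \eqref{p3-Reau} gives $a_0\intdnb|v^6|^2\,dx=\Re(F_n,U_n)_{\HH}\leq\norm{F_n}_{\HH}=o(\la_n^{-2})$. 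Hence $\norm{v^6}_{L^2(0,\beta)}=o(\la_n^{-1})$, and since $v^6=i\la_n v^5-f^5$ this yields $\norm{\la_n v^5}_{L^2(0,\beta)}=o(\la_n^{-1})$ and $\norm{v^5}_{L^2(0,\beta)}=o(\la_n^{-2})$. This is the only quantitative information the damping provides, localized to $(0,\beta)$.

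The core of the argument is to upgrade this local, damped-region information into control of the full $\HH$-norm. Substituting $v^2=i\la v^1-f^1$, $v^4=i\la v^3-f^3$ and $v^6=i\la v^5-f^5$ into the second, fourth and sixth component equations turns the spectral problem into a coupled second-order system for $(v^1,v^3,v^5)$ whose right-hand side is small (of size controlled by $\norm{F_n}_{\HH}$ together with the lower-order terms $i\la f^j$). On $(0,\beta)$ the estimates on $v^5$ and $\la v^5$ propagate, via multiplication of the sixth equation by $v^5$ and integration by parts, to control of $v^5_x$, and then through the coupling terms to $v^1,v^1_x,v^3,v^3_x$ on a subinterval. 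The decisive structural step invokes the hypothesis $k_1/\rho_1=k_2/\rho_2$: this matches the speeds of the $\varphi$- and $\psi$-equations, so that the shear force $k_1(v^1_x+v^3+lv^5)$ is absorbed by the Timoshenko subsystem; the residual mismatch, governed precisely by $k_1\neq k_3$ (so that the longitudinal speed $k_3/\rho_1$ differs from $k_1/\rho_1$), is what forces the loss of two powers of $\la$ and hence the rate $t^{-1}$ instead of exponential decay.

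To reach the undamped region $(\beta,L)$ I would use the specific piecewise multiplier method: choosing a cutoff $h\in C^1([0,L])$ adapted to the interface at $x=\beta$ and multiplying each second-order equation by $h$ times the appropriate first derivative, then integrating by parts, transfers the $L^2$-control of the solution and its gradient across $x=\beta$, with the boundary contributions at $\beta$ absorbed into the already-established local estimates. Summing the resulting identities and tracking the powers of $\la$ carefully would give $\norm{U_n}_{\HH}=o(1)$, contradicting $\norm{U_n}_{\HH}=1$. I expect the main obstacle to be exactly this propagation step: because the damping acts only on $w$, whose wave speed $k_3/\rho_1$ does not match that of $\varphi$ and $\psi$, the cross terms generated by the curvature $l$ and by the speed mismatch must be dominated by the single damped quantity $\intdnb|v^6|^2\,dx$ without spending more than two powers of $\la$, and keeping this bookkeeping tight is what pins the decay rate at $t^{-1}$.
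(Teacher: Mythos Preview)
Your proposal is correct and follows essentially the same approach as the paper: reduction to a resolvent estimate via Borichev--Tomilov with $\ell=2$, a contradiction argument extracting the dissipation bound on $v^6$ in $(0,\beta)$, then a cascade of cut-off multiplier identities propagating control successively to $v^5_x$, $v^1_x$, $\lambda v^1$, $v^3_x$, $\lambda v^3$ on nested subintervals (the hypothesis $k_1/\rho_1=k_2/\rho_2$ being used exactly to kill the $\bigl(\tfrac{k_2}{\rho_2}-\tfrac{k_1}{\rho_1}\bigr)\int \f_j v^1_{xx}\overline{v^3_x}\,dx$ cross term), and finally the piecewise multiplier $\h=x\q_1+(x-L)\q_2$ to transport the estimates to the undamped region. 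The paper's argument makes the multipliers explicit---for instance, pairing the $w$-equation with $\f_2\overline{v^1_x}$ to isolate $l(k_1+k_3)\int\f_2|v^1_x|^2\,dx$, which is the step where $k_1\neq k_3$ forces the loss of two powers of $\lambda$ via the surviving term $\int\f_2 v^5_x\overline{v^1_{xx}}\,dx$---but your outline captures the same mechanism.
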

\begin{theoreme}\label{p3-pol-neq}{\rm
		If \begin{equation*}
		\displaystyle \frac{k_1}{\rho_1}\neq \frac{k_2}{\rho_2},	
		\end{equation*}
		 then there exists $C>0$ such that for every $U_{0}\in D(\AA)$, we have 
	\begin{equation}\label{3}
		E(t)\leq \frac{C}{\sqrt{t}}\|U_0\|^2_{D(\AA)},\quad t>0.
		\end{equation}}
\end{theoreme}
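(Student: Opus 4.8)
The plan is to derive the statement from the Borichev--Tomilov characterization of polynomial stability. The semigroup $(e^{t\AA})_{t\geq 0}$ is a contraction semigroup (Proposition \ref{p3-mdissip}), hence bounded, and since $\AA$ has compact resolvent while, by the proof of Theorem \ref{p3-strongthm2}, $\sigma(\AA)\cap i\R=\emptyset$, we have $i\R\subset\rho(\AA)$. By Borichev--Tomilov, the decay $E(t)\leq C t^{-1/2}\|U_0\|^2_{D(\AA)}$ is equivalent to the resolvent estimate $\|(i\lambda I-\AA)^{-1}\|_{\mathcal{L}(\HH)}=O(|\lambda|^{4})$ as $|\lambda|\to\infty$; the exponent $4$ is the value $\alpha$ for which $t^{-2/\alpha}=t^{-1/2}$. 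First I would argue by contradiction: if this estimate fails there exist $\lambda_n\in\R$ with $|\lambda_n|\to\infty$ and $U_n=(v^1,\dots,v^6)^\top\in D(\AA)$ with $\|U_n\|_{\HH}=1$ such that
\[
\lambda_n^{4}\,(i\lambda_n I-\AA)U_n=F_n\longrightarrow 0\quad\text{in }\HH .
\]
Writing out the six components and eliminating $v^2,v^4,v^6$ via $v^2=i\lambda v^1-\lambda^{-4}f^1$, $v^4=i\lambda v^3-\lambda^{-4}f^3$, $v^6=i\lambda v^5-\lambda^{-4}f^5$ converts the problem into a coupled second-order system for $(v^1,v^3,v^5)$ with right-hand sides of size $O(\lambda_n^{-3})$. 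The goal is to prove $\|U_n\|_{\HH}=o(1)$, contradicting $\|U_n\|_{\HH}=1$.

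Second, the dissipation supplies the only estimates that come for free. Taking the real part of the inner product of the scaled equation with $U_n$ gives
\[
\lambda_n^{4}\,a_0\int_0^\beta|v^6|^2\,dx=\Re(F_n,U_n)_{\HH}\longrightarrow 0 ,
\]
whence $\|v^6\|_{L^2(0,\beta)}=o(\lambda_n^{-2})$, and then the fifth equation yields $\|v^5\|_{L^2(0,\beta)}=o(\lambda_n^{-3})$. Everything else about the other four unknowns, and about $(v^5,v^6)$ on $(\beta,L)$, must be extracted from the coupling.

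Third, and this is the bulk of the work, I would run a cascade of multiplier identities to propagate the smallness of $(v^5,v^6)$ on $(0,\beta)$ to all six unknowns on all of $(0,L)$. On $(0,\beta)$ I would first test the longitudinal equation against cut-off multipliers built from the axial flux $v^5_x-lv^1$ to control $v^5_x$, then recover $v^1$ from that flux, pass to $v^1_x$ and $v^2$ through the first two equations, and finally reach $v^3,v^3_x,v^4$ from the third and fourth equations. To cross the interface at $x=\beta$ into the undamped region $(\beta,L)$ I would use a weight/cut-off function $p(x)$ concentrated near $\beta$ together with the classical ``$p\cdot(\text{flux})$'' multipliers, producing observability-type inequalities that bound the energy on $(\beta,L)$ by the quantities already controlled at $x=\beta$ plus $o(1)$ remainders; the fully Dirichlet conditions kill the boundary contributions at $x=0$ and $x=L$, leaving only interface terms at $x=\beta$ to absorb.

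The main obstacle is exactly the hypothesis $\frac{k_1}{\rho_1}\neq\frac{k_2}{\rho_2}$. In the equal-speed situation a decisive cancellation in the transverse/shear coupling lets the estimates close with only two powers of $\lambda$, which is what produces the faster $t^{-1}$ rate of Theorem \ref{p3-pol-eq}; here the cross terms coupling $v^1$ (the transverse wave $\varphi$) and $v^3$ (the shear wave $\psi$) no longer cancel and must instead be absorbed, at a cost of two further powers of $\lambda$, which is precisely what forces the $\lambda^4$ scaling. The delicate point is therefore the bookkeeping of powers of $\lambda$: one must verify that every interface term generated at $x=\beta$ and every non-cancelling cross term is genuinely $o(1)$ rather than merely $O(1)$. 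Once all components---including the elastic quantities $\|v^1_x+v^3+lv^5\|$, $\|v^3_x\|$ and $\|v^5_x-lv^1\|$ that constitute $\|U_n\|_{\HH}$---are shown to be $o(1)$ in $L^2(0,L)$, we obtain $\|U_n\|_{\HH}=o(1)$, the desired contradiction, and hence the resolvent bound and the theorem.
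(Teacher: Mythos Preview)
Your proposal is correct and follows essentially the same approach as the paper: Borichev--Tomilov reduces the decay to the resolvent bound $\|(i\lambda I-\AA)^{-1}\|=O(\lambda^{4})$, which is proved by contradiction via the dissipation estimate on $(0,\beta)$ and a cascade of cut-off multiplier identities, with the unequal-speed hypothesis forcing the extra two powers of $\lambda$ precisely at the step where the $v^1$--$v^3$ cross term must be absorbed (in the paper this is Lemma~\ref{p3-4thlemma}, which upgrades $\|v^1_x\|$ to $o(\lambda^{-1})$ on a sub-interval so that $\int v^1_x\overline{v^3_{xx}}$ is $o(1)$). The only minor deviation is the propagation step: the paper does not treat $x=\beta$ as an interface but instead uses a single global $C^1$ multiplier $\mathsf h$ with $\mathsf h(0)=\mathsf h(L)=0$ (Lemma~\ref{p4-lem7}), so the Dirichlet conditions kill all boundary terms and the energy outside a controlled sub-interval of $(0,\beta)$ is bounded directly by quantities on that sub-interval, with no interface terms at $\beta$ to absorb.
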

\noindent 
According to \cite{Huang01}, \cite{pruss01} and Theorem 2.4 in \cite{Borichev01} (see also \cite{Batty01} and \cite{RaoLiu01}), a $C_0-$semigroup of contractions $\left(e^{t\AA}\right)_{t\geq0}$ on $\HH$ satisfy \eqref{1}, \eqref{2} and \eqref{3} if 
\begin{equation}\tag{${\rm M_1}$}\label{p3-M1-cond}
	i\R \subset \rho(\AA)
\end{equation}
\begin{equation}\tag{${\rm M_2}$}\label{p3-H-cond}
\sup_{\la\in \R}\left\|\left(i\la I-\AA\right)^{-1}\right\|_{\mathcal{L}(\HH)}=O\left(\abs{\la}^{\ell}\right), \quad \text{with} \quad \left\{\begin{array}{lll}
\ell=0 \ \ \text{for Theorem \ref{exps}},\vspace{0.15cm}\\
\ell=2 \ \ \text{for Theorem \ref{p3-pol-eq}},\vspace{0.15cm}\\
\ell=4 \ \ \text{for Theorem \ref{p3-pol-neq}}.
\end{array}\right.
\end{equation}
Since $i\R\subset \rho(\AA)$ (see Section \ref{p3-sec3}), then condition \eqref{p3-M1-cond} is satisfied. We will prove condition \eqref{p3-H-cond} by a contradiction argument. For this purpose,
suppose that \eqref{p3-H-cond} is false, then there exists $\left\{(\la^n,U^n)\right\}_{n\geq1}\subset \R^{\ast} \times D(\mathcal{A})$ with
\begin{equation}\label{p3-contra-pol2}
|\la^n|\to\infty \quad \text{and}\quad \|U^n\|_{\mathcal{H}}=\|(v^{1,n},v^{2,n},v^{3,n},v^{4,n},v^{5,n},v^{6,n})^{\top}\|_{\HH}=1,
\end{equation}
such that  
\begin{equation}\label{p3-eq0ps}
(\la^n )^{\ell} (i\la^n I-\AA )U^n =F^n:=(f^{1,n},f^{2,n},f^{3,n},f^{4,n},f^{5,n},f^{6,n})^{\top}  \to 0  \quad \text{in}\quad \HH.
\end{equation} 
For simplicity, we drop the index $n$. Equivalently, from \eqref{p3-eq0ps}, we have

\begin{eqnarray}
	i\la v^1 -v^2 &=&\la^{-\ell}f^1,\label{p3-f1ps}\\
	i\la \rho_1 v^2 -k_1 (v^1_x +v^3+lv^5)_x -lk_3 (v^5_x-lv^1)&=&\rho_1 \la^{-\ell}f^2,\label{p3-f2ps}\\
	i\la v^3-v^4&=&\la^{-\ell}f^3,\label{p3-f3ps}\\
	i\la \rho_2 v^4 -k_2 v^3_{xx}+k_1 (v^1_x+v^3+lv^5)&=&\rho_2 \la^{-\ell}f^4,\label{p3-f4ps}\\
	i\la v^5-v^6&=&\la^{-\ell}f^5,\label{p3-f5ps}\\
	i\la \rho_1 v^6 -k_3 (v^5_x-lv^1)_x +lk_1 (v^1_x+v^3+lv^5)+a(x)v^6&=&\rho_1 \la^{-\ell}f^6. \label{p3-f6ps}
\end{eqnarray}
By inserting \eqref{p3-f1ps} in \eqref{p3-f2ps}, \eqref{p3-f3ps} in \eqref{p3-f4ps} and \eqref{p3-f5ps} in \eqref{p3-f6ps}, we deduce that
\begin{eqnarray}
\la^2 \rho_1 v^1 +k_1 (v^1_x +v^3+lv^5)_x +lk_3 (v^5_x-lv^1)&=&-\rho_1 \la^{-\ell}f^2-i\rho_1 \la^{-\ell+1}f^1,\label{p4-4.18*}\\
\la^2 \rho_2 v^3 +k_2 v^3_{xx} -k_1(v^1_x+v^3+lv^5)&=&-\rho_2 \la^{-\ell}f^4-i\rho_2 \la^{-\ell+1}f^3,\label{p4-4.18**}\\
\la^2\rho_1 v^5 +k_3 (v^5_x-lv^1)_x -lk_1 (v^1_x+v^3+lv^5)-a(x)v^6&=&-\rho_1 \la^{-\ell}f^6 -i\rho_1\la^{-\ell+1}  f^5.\label{p4-4.18***}
\end{eqnarray}
Here we will check the condition \eqref{p3-H-cond} by finding a contradiction with \eqref{p3-contra-pol2} by showing $\left\|U\right\|_{\HH}=o(1)$.  For clarity, we divide the proof into several Lemmas. From the above system and the fact that $\ell \in \{2,4\}$, $\|U\|_\HH=1$ and $\|F\|_\HH=o(1)$, we remark that 
$$
\begin{array}{lll}
\displaystyle \|v^1\|=O\left(\left|\la\right|^{-1}\right), \  \|v^3\|=O\left(\left|\la\right|^{-1}\right), \  \|v^5\|=O\left(\left|\la\right|^{-1}\right), \  \|v^1_{xx}\|=O\left(\left|\la\right|\right), \  \|v^3_{xx}\|=O\left(\left|\la\right|\right)\ \text{and} \  \left\|v^5_{xx} \right\|=O\left(\left|\la\right|\right).
\end{array}
$$
Also, from Poincar\'e inequality and the fact that $\|F\|_\HH=o(1)$, we remark that 
\begin{equation*}\label{p3-poincare}
\|f^1\|=o(1), \ \ 	\|f^3\|=o(1) \ \ \text{and} \ \ 	\|f^5\|=o(1).
\end{equation*}
\\
We define the following hypotheses:
	 \begin{equation}\tag{{\rm H$_1$}}\label{H1}
\frac{k_1}{\rho_1}=\frac{k_2}{\rho_2}, \ \ k_1=k_3 \ \ \text{and} \ \ \ell =0;
\end{equation} 
\begin{equation}\tag{{\rm H$_2$}}\label{H2}
 \displaystyle \frac{k_1}{\rho_1}=\frac{k_2}{\rho_2}, \ \ k_1\neq k_3\ \ \text{and} \ \ \ell=2;
\end{equation}
\begin{equation}\tag{{\rm H$_3$}}\label{H3}
		 \displaystyle \frac{k_1}{\rho_1}\neq \frac{k_2}{\rho_2} \ \ \text{and} \ \ \ell=4;
\end{equation}  
\begin{rk}
	{\rm 
		According to Remark 3.8 in \cite{Wehbe03}, the case of equal speed propagation (i.e., when \eqref{H1} holds) has only mathematical sound. $\hfill\square$
		
	}
\end{rk}
\begin{lem}\label{p4-1stlemps}
		{\rm If \eqref{H1} or \eqref{H2} or \eqref{H3} holds. then the solution $U=(v^1,v^2,v^3,v^4,v^5,v^6)^{\top}\in D(\AA)$ of  \eqref{p3-f1ps}-\eqref{p3-f6ps} satisfies the following estimations 
		\begin{equation}\label{p3-v65xx}
			\int_{0}^{\beta } \left|v^6\right|^2dx=o(\la^{-\ell}) \ \ \text{and} \ \  \int_{0}^{\beta } \left|v^5\right|^2dx=o(\la^{-\ell-2}).
		\end{equation}
	}
\end{lem}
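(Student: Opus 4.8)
The plan is to extract both estimates directly from the dissipation of $\AA$, using the fact that the damping $a(x)$ is supported exactly on $(0,\beta)$. First I would take the $\HH$-inner product of the resolvent equation \eqref{p3-eq0ps} (equivalently $(i\la I-\AA)U=\la^{-\ell}F$) with $U$ itself and pass to real parts. Since $(i\la U,U)_{\HH}=i\la\|U\|_{\HH}^2$ is purely imaginary for real $\la$, this eliminates the leading term and leaves $\Re(\AA U,U)_{\HH}=-\la^{-\ell}\Re(F,U)_{\HH}$. Combining this with the dissipation identity \eqref{p3-Reau}, namely $\Re(\AA U,U)_{\HH}=-a_0\intdnb\abs{v^6}^2dx$, gives
\begin{equation*}
a_0\intdnb\abs{v^6}^2dx=\la^{-\ell}\Re(F,U)_{\HH}\leq \la^{-\ell}\norm{F}_{\HH}\norm{U}_{\HH}.
\end{equation*}
As $\ell\in\{0,2,4\}$ is even we have $\la^{-\ell}=\abs{\la}^{-\ell}\geq 0$, and since $\norm{U}_{\HH}=1$ and $\norm{F}_{\HH}=o(1)$ by \eqref{p3-contra-pol2}--\eqref{p3-eq0ps}, the right-hand side is $o(\la^{-\ell})$. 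This is exactly the first claimed estimate.

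With the $v^6$-estimate in hand, the second estimate is a one-line consequence of the kinematic relation \eqref{p3-f5ps}. Writing it as $i\la v^5=v^6+\la^{-\ell}f^5$ and taking the $L^2(0,\beta)$-norm yields
\begin{equation*}
\abs{\la}\,\norm{v^5}_{L^2(0,\beta)}\leq \norm{v^6}_{L^2(0,\beta)}+\la^{-\ell}\norm{f^5}.
\end{equation*}
The first term on the right is $o(\la^{-\ell/2})$ by the estimate just proved, while $\norm{f^5}=o(1)$ (as recorded via the Poincar\'e inequality just before the statement), so $\la^{-\ell}\norm{f^5}=o(\la^{-\ell})$. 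Because $\abs{\la}\to\infty$ forces $\abs{\la}^{-\ell}\leq\abs{\la}^{-\ell/2}$, the dominant contribution is $o(\la^{-\ell/2})$; dividing by $\abs{\la}$ and squaring gives $\intdnb\abs{v^5}^2dx=o(\la^{-\ell-2})$, as required.

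I do not expect any genuine obstacle here: once the dissipation identity \eqref{p3-Reau} is invoked, everything reduces to careful bookkeeping of the powers of $\la$, and the argument is uniform across the three hypotheses \eqref{H1}--\eqref{H3}, since only the value of $\ell$ together with $\norm{U}_{\HH}=1$ and $\norm{F}_{\HH}=o(1)$ enters. The single point to watch is that the damping region and the integration domain $(0,\beta)$ coincide, so that \eqref{p3-Reau} controls precisely the quantity $\intdnb\abs{v^6}^2dx$ that appears in the statement, and no localization cutoff is needed at this first stage.
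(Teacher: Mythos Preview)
Your proposal is correct and follows essentially the same route as the paper: take the real part of the inner product of the resolvent equation with $U$ and invoke the dissipation identity \eqref{p3-Reau} to get the $v^6$-estimate, then use \eqref{p3-f5ps} to transfer it to $v^5$. The only cosmetic difference is that the paper squares first (writing $\intdnb|v^5|^2dx\leq \la^{-2}\intdnb|v^6|^2dx+\la^{-2\ell-2}\intdnb|f^5|^2dx$) whereas you apply the triangle inequality at the level of norms and square at the end; both are equivalent.
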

\begin{proof}
		First, taking the inner product of \eqref{p3-eq0ps} with $U$ in $\HH$ and using \eqref{p3-Reau}, we get
	\begin{equation}\label{p3-4.10}
	\displaystyle	\intdx a(x)\left|v^6\right|^2dx =a_0 \intdnb \left|v^6\right|^2dx=-\Re \left(\AA U,U\right)_{\HH}=\la^{-\ell}\Re  \left(F,U\right)_{\HH} \leq \la^{-\ell} \|F\|_{\HH}\|U\|_{\HH} .
	\end{equation}Thus, from \eqref{p3-4.10}  and the fact that $\|F\|_{\HH}=o(1)$ and $\|U\|_{\HH}=1$, we obtain the first estimation in \eqref{p3-v65xx}. From  \eqref{p3-f5ps}, we deduce that 
	\begin{equation}\label{p3-4.11}
		\intdnb \left|v^5\right|^2dx \leq \frac{1}{\la^{2}}	\intdnb \left|v^6\right|^2dx+\frac{1}{\la^{2\ell+2}}	\intdnb \left|f^5\right|^2dx.
	\end{equation}
Finally, from \eqref{p3-4.11}, the first estimation in \eqref{p3-v65xx}, and the fact that $\ell\in\{0,2,4\}$, $\|f^5\|=o(1)$, we get the second estimation in \eqref{p3-v65xx}. The proof is thus complete.
\end{proof}\\\linebreak
For all $\displaystyle 0<\varepsilon<\frac{\beta}{12}$, we fix the following cut-off functions \\ \linebreak
\begin{itemize}
\item  $\f_j\in C^{2}\left([0,L]\right)$, $j\in \{1,\cdots,6\}$ such that $0\leq \f_j (x)\leq 1$, for all $x\in[0,L]$ and 
\begin{equation*}
\f_j (x)= 	\left \{ \begin{array}{lll}
1 &\text{if} \quad \,\,  x \in [j\varepsilon ,\beta -j\varepsilon],&\vspace{0.1cm}\\
0 &\text{if } \quad x \in [0,(j-1)\varepsilon]\cup [\beta +(1-j)\varepsilon ,L].&
\end{array}	\right. \qquad\qquad
\end{equation*}\\
\item  $\q_1, \q_2 \in C^{1}\left([0,L]\right)$ such that $0\leq \q_1 (x)\leq 1$, \  $0\leq \q_2 (x)\leq 1$ for all $x\in[0,L]$ and 
\begin{equation*}
\q_1 (x)= 	\left \{ \begin{array}{lll}
1 &\text{if} \quad \,\,  x \in [0,\alpha_1] ,&\vspace{0.1cm}\\
0 &\text{if } \quad x \in [\alpha_2,L],&
\end{array}	\right. 
\text{and} \quad 
\q_2 (x)= 	\left \{ \begin{array}{lll}
0 &\text{if} \quad \,\,  x \in [0,\alpha_1] ,&\vspace{0.1cm}\\
1 &\text{if } \quad x \in [\alpha_2,L ],&
\end{array}	\right. \text{with} \ \ 0<\alpha_1 <\alpha_2 <\beta <L.
\end{equation*}
\\ 
\end{itemize}

\begin{lem}\label{p4-lem2}
{\rm If \eqref{H1} or \eqref{H2} or \eqref{H3} holds. then the solution $U=(v^1,v^2,v^3,v^4,v^5,v^6)^{\top}\in D(\AA)$ of  \eqref{p3-f1ps}-\eqref{p3-f6ps} satisfies the following estimations 
	\begin{equation}\label{p4-4.15}
\int_{\varepsilon}^{\beta-\varepsilon}|v^5_x|^2 dx =\frac{o(1)}{|\la|^{\min\left(\frac{\ell}{2}+1,\ell \right)}}=
\left\{\begin{array}{lll}
\displaystyle \frac{o(1)}{|\la|^\ell} & \text{if} & \ell \in \{0,2\},\vspace{0.25cm}\\
\displaystyle \frac{o(1)}{|\la|^{\frac{\ell}{2}+1}} & \text{if} & \ell \in \{2,4\}.
\end{array}\right.
	\end{equation}
	\begin{proof}
	First, multiplying \eqref{p3-f6ps} by $\f_1 \overline{v^5}$, integrating over $(0,\beta)$, and using the fact that $\|v^5\|=O\left(|\la|^{-1}\right)$ , $\|f^6\|=o(1)$, we obtain
	\begin{equation*}
	\begin{array}{lll}
	\displaystyle	i\la  \rho_1 \intdnb \f_1 v^6 \overline{v^5}dx -k_3 \intdnb \f_1v^5_{xx} \overline{v^5}dx +lk_3\intdnb \f_1 v^1_x \overline{v^5}dx +lk_1 \intdnb \f_1 (v^1_x+v^3+lv^5)\overline{v^5}dx\vspace{0.25cm}\\
	\displaystyle +\, a_0\intdnb \f_1 v^6\overline{v^5}dx=o(\la^{-\ell}).
	\end{array}
	\end{equation*}
	Using integration by parts in the above equation and the fact that $\f_1 (0)=\f_1 (\beta)=0$, we get 
		\begin{equation}\label{p4-4.16}
	\begin{array}{lll}
	\displaystyle k_3\intdnb \f_1|v^5_x|^2 dx =-\,k_3\intdnb \f_1^\prime v^5_x \overline{v^5}dx -	i\la  \rho_1 \intdnb \f_1 v^6 \overline{v^5}dx  -lk_3\intdnb \f_1 v^1_x \overline{v^5}dx \vspace{0.25cm}\\
\hspace{3cm}	\displaystyle -\,lk_1 \intdnb \f_1 (v^1_x+v^3+lv^5)\overline{v^5}dx- a_0 \intdnb \f_1 v^6\overline{v^5}dx+o(\la^{-\ell}).
	\end{array}
	\end{equation}
	Using the above estimation, Lemma \ref{p4-1stlemps} and the fact that $v^1_x$, $v^5_x$, $(v^1_x+v^3+lv^5)$ are uniformly bounded in $L^2(0,L)$, $\ell \in \{0,2,4\}$, we obtain
	$$
	k_3\intdnb \f_1 |v^5_x|^2 dx =\frac{o(1)}{|\la|^{\min\left(\frac{\ell}{2}+1,\ell \right)}}.
	$$
Finally, from the above estimation and the definition of $\f_1$, we obtain \eqref{p4-4.15}. The proof is thus complete.
	\end{proof}
}
\end{lem}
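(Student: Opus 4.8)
The plan is to derive a localized bound for $v^5_x$ on the damped interval by testing the longitudinal equation \eqref{p3-f6ps} against $\f_1\overline{v^5}$ and integrating over $(0,\beta)$, where the cut-off $\f_1$ vanishes at $0$ and $\beta$ and equals $1$ on $[\varepsilon,\beta-\varepsilon]$. On $(0,\beta)$ one has $a(x)\equiv a_0$, so after multiplying and integrating, an integration by parts on the second-order term $-k_3(v^5_x-lv^1)_x$ produces the nonnegative quantity $k_3\int_0^\beta\f_1|v^5_x|^2\,dx$ together with a cross term $k_3\int_0^\beta\f_1'\,v^5_x\overline{v^5}\,dx$, with no surviving boundary contribution since $\f_1(0)=\f_1(\beta)=0$. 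Solving for $k_3\int_0^\beta\f_1|v^5_x|^2\,dx$ then writes it as a sum of interior terms plus the source contribution $\rho_1|\la|^{-\ell}\int_0^\beta\f_1 f^6\overline{v^5}\,dx$, which is already $o(|\la|^{-\ell})$ because $\|f^6\|=o(1)$ and $\|v^5\|=O(|\la|^{-1})$.

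Next I would estimate each interior term by Cauchy--Schwarz, feeding in the two decay rates from Lemma \ref{p4-1stlemps}, namely $\|v^6\|_{L^2(0,\beta)}=o(|\la|^{-\ell/2})$ and $\|v^5\|_{L^2(0,\beta)}=o(|\la|^{-\ell/2-1})$, together with the uniform $L^2(0,L)$-boundedness of $v^1_x$, $v^5_x$ and $v^1_x+v^3+lv^5$ that follows from $\|U\|_\HH=1$. The terms pairing a bounded quantity ($\f_1'\,v^5_x$, $v^1_x$, or $v^1_x+v^3+lv^5$) with $\overline{v^5}$ are each of size $o(|\la|^{-\ell/2-1})$; the $a_0$-term, pairing $v^6$ with $v^5$, is smaller, of size $o(|\la|^{-\ell-1})$; and the remaining term $-i\la\rho_1\int_0^\beta\f_1 v^6\overline{v^5}\,dx$ carries the explicit factor $\la$, giving $|\la|\cdot o(|\la|^{-\ell/2})\cdot o(|\la|^{-\ell/2-1})=o(|\la|^{-\ell})$.

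Collecting, $k_3\int_0^\beta\f_1|v^5_x|^2\,dx$ is controlled by the larger of $o(|\la|^{-\ell/2-1})$ and $o(|\la|^{-\ell})$, that is, by $o(1)/|\la|^{\min(\ell/2+1,\ell)}$. Since the integrand is nonnegative and $\f_1\equiv1$ on $[\varepsilon,\beta-\varepsilon]$, discarding the rest of $(0,\beta)$ yields \eqref{p4-4.15}. The step I would watch most carefully is precisely the $\la$-weighted term $-i\la\rho_1\int_0^\beta\f_1 v^6\overline{v^5}\,dx$: it is the only contribution in which the growing frequency appears, so whether it or the first-derivative cross terms dominate is decided by comparing $|\la|^{-\ell}$ with $|\la|^{-\ell/2-1}$. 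This comparison is exactly what produces the $\min(\ell/2+1,\ell)$ exponent, the $\la$-weighted term setting the rate when $\ell\in\{0,2\}$ and the cross terms setting it when $\ell\in\{2,4\}$; obtaining the two-branch form of \eqref{p4-4.15} therefore reduces to tracking these competing powers rather than to any single delicate estimate.
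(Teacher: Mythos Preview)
Your proposal is correct and follows essentially the same approach as the paper: multiply \eqref{p3-f6ps} by $\f_1\overline{v^5}$, integrate over $(0,\beta)$, integrate by parts using $\f_1(0)=\f_1(\beta)=0$, and then bound each resulting term using Lemma~\ref{p4-1stlemps} and the uniform $L^2$-bounds coming from $\|U\|_{\HH}=1$. Your term-by-term bookkeeping of the competing powers of $|\la|$ is more explicit than the paper's, but the argument is the same.
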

\begin{lem}\label{p4-lem3}
	{\rm If \eqref{H2} or \eqref{H3} holds. then the solution $U=(v^1,v^2,v^3,v^4,v^5,v^6)^{\top}\in D(\AA)$ of  \eqref{p3-f1ps}-\eqref{p3-f6ps} satisfies the following estimations 
	\begin{equation}\label{p4-4.12}
	\int_{2\varepsilon}^{\beta-2\varepsilon}\left|v^1_x\right|^2 dx =o(1) \quad \text{and} \quad \int_{3\varepsilon}^{\beta-3\varepsilon}\left|\la v^1\right|^2 dx =o(1).
	\end{equation}	
		
	}
\end{lem}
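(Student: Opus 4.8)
The plan is to establish the two estimates in sequence: first the derivative bound $\int_{2\varepsilon}^{\beta-2\varepsilon}|v^1_x|^2=o(1)$, read off from the axial resolvent equation \eqref{p4-4.18***}, and then the kinetic bound $\int_{3\varepsilon}^{\beta-3\varepsilon}|\la v^1|^2=o(1)$, deduced from the first resolvent equation \eqref{p4-4.18*}. The point that makes this possible, in contrast to Lemma \ref{p4-lem2}, is that the longitudinal quantities $v^5,v^5_x,v^6$ are already known to be small on the damped region (Lemmas \ref{p4-1stlemps} and \ref{p4-lem2}), so \eqref{p4-4.18***} can be treated as an identity for $lk_3v^1_x$ whose right-hand side consists of quantities that are either small or can be recombined against the unknown itself.

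\textbf{First estimate.} I would multiply \eqref{p4-4.18***} by $\f_2\overline{v^1_x}$, integrate over $(0,\beta)$ (where $a(x)=a_0$), and take real parts, aiming to isolate $-lk_3\int_0^\beta\f_2|v^1_x|^2dx$. The terms $\la^2\rho_1\int\f_2v^5\overline{v^1_x}dx$ and $a_0\int\f_2v^6\overline{v^1_x}dx$ are $o(1)$, since $\|v^1_x\|=O(1)$ while Lemma \ref{p4-1stlemps} gives $\|\la^2v^5\|_{L^2(0,\beta)}=o(|\la|^{1-\ell/2})$ and $\|v^6\|_{L^2(0,\beta)}=o(|\la|^{-\ell/2})$ (this is where $\ell\ge2$ enters); the forcing term is negligible. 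For $k_3\int\f_2v^5_{xx}\overline{v^1_x}dx$ I would integrate by parts (using $\f_2(0)=\f_2(\beta)=0$) to produce $\int\f_2'\overline{v^1_x}v^5_xdx$ and $\int\f_2\overline{v^1_{xx}}v^5_xdx$; since $\mathrm{supp}\,\f_2\subset[\varepsilon,\beta-\varepsilon]$, Lemma \ref{p4-lem2} makes $v^5_x$ small there, and with $\|v^1_x\|=O(1)$, $\|v^1_{xx}\|=O(|\la|)$ both are $o(1)$. The only genuinely $O(1)$ contribution is the shear coupling $-lk_1\int\f_2S\overline{v^1_x}dx$, with $S:=v^1_x+v^3+lv^5$. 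Rather than bounding it, I would expand $S\overline{v^1_x}=|v^1_x|^2+(v^3+lv^5)\overline{v^1_x}$, so its $|v^1_x|^2$ part merges with $-lk_3\int\f_2|v^1_x|^2dx$ into the definite-sign quantity $-l(k_1+k_3)\int\f_2|v^1_x|^2dx$, while the remainder $\int\f_2(v^3+lv^5)\overline{v^1_x}dx$ is $o(1)$ because $\|v^3\|,\|v^5\|=O(|\la|^{-1})$ and $\|v^1_x\|=O(1)$. Collecting everything gives $l(k_1+k_3)\int\f_2|v^1_x|^2dx=o(1)$, and since $\f_2\equiv1$ on $[2\varepsilon,\beta-2\varepsilon]$ this is the first estimate.

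\textbf{Second estimate.} I would multiply \eqref{p4-4.18*} by $\f_3\overline{v^1}$, integrate over $(0,\beta)$, and isolate $\rho_1\la^2\int\f_3|v^1|^2dx$. Integrating the shear term $k_1\int\f_3S_x\overline{v^1}dx$ by parts produces $-k_1\int\f_3|v^1_x|^2dx$ together with $k_1\int\f_3'S\overline{v^1}dx$ and $-k_1\int\f_3(v^3+lv^5)\overline{v^1_x}dx$; since $\mathrm{supp}\,\f_3\subset\{\f_2\equiv1\}$, the first estimate already gives $\int\f_3|v^1_x|^2dx=o(1)$, and the other two are $o(1)$ from $\|v^1\|=O(|\la|^{-1})$ and $\|S\|=O(1)$. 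The curvature terms $lk_3\int\f_3v^5_x\overline{v^1}dx$ and $-l^2k_3\int\f_3|v^1|^2dx$ are likewise $o(1)$ by Lemma \ref{p4-lem2} and $\|v^1\|=O(|\la|^{-1})$, and the forcing is negligible. This yields $\rho_1\la^2\int\f_3|v^1|^2dx=o(1)$, which with $\f_3\equiv1$ on $[3\varepsilon,\beta-3\varepsilon]$ gives the second estimate.

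\textbf{Main obstacle.} The delicate step is the first estimate: the shear contribution $\int\f_2S\overline{v^1_x}dx$ is only $O(1)$ and cannot simply be bounded away, so everything depends on the algebraic cancellation that turns it, together with the axial curvature term, into a coercive multiple of $\int\f_2|v^1_x|^2dx$ with the cross-coupling demoted to lower order. The scheme also relies crucially on $\ell\ge2$, i.e. on \eqref{H2} or \eqref{H3} rather than \eqref{H1}: were $\ell=0$, the products $\la^2v^5\cdot\overline{v^1_x}$ and $v^1_{xx}\cdot v^5_x$ would only be $O(|\la|)$ and the argument would collapse.
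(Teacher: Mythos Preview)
Your proposal is correct and follows essentially the same route as the paper: for the first estimate you test the axial equation against $\f_2\overline{v^1_x}$, merge the $-lk_3|v^1_x|^2$ and $-lk_1|v^1_x|^2$ contributions into the coercive term $l(k_1+k_3)\int\f_2|v^1_x|^2dx$, and control the remaining pieces via Lemmas \ref{p4-1stlemps}--\ref{p4-lem2} together with $\|v^1_{xx}\|=O(|\la|)$; for the second estimate you test \eqref{p4-4.18*} against $\f_3\overline{v^1}$ and invoke the first estimate on the support of $\f_3$. The only cosmetic difference is that the paper multiplies \eqref{p3-f6ps} (with $i\la\rho_1 v^6$) rather than \eqref{p4-4.18***} (with $\la^2\rho_1 v^5$), but since $v^6=i\la v^5-\la^{-\ell}f^5$ these two choices produce identical estimates, and your observation that the argument genuinely requires $\ell\ge2$ is exactly why the lemma is stated only under \eqref{H2} or \eqref{H3}.
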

\begin{proof}
First, multiplying \eqref{p3-f6ps} by $\mathsf{f}_2\overline{v^1_x}$, integrating over $(\varepsilon,\beta-\varepsilon)$,  and using the fact that $v^1_x$ is uniformly bounded in $L^2(0,L)$ and $\|f^6\|=o(1)$, we get 
	\begin{equation*}
	\begin{array}{lll}
	\displaystyle l(k_1+k_3) \int_{\varepsilon}^{\beta-\varepsilon}\f_2 |v^1_x|^2 dx=-	i\la \rho_1 \int_{\varepsilon}^{\beta-\varepsilon} \f_2 v^6\overline{v^1_x}dx +k_3\int_{\varepsilon}^{\beta-\varepsilon} \f_2 v^5_{xx}\overline{v^1_x }dx- lk_1 \int_{\varepsilon}^{\beta-\varepsilon}\f_2 (v^3+lv^5)\overline{v^1_x}dx\vspace{0.25cm}\\
	\hspace{4.5cm} \displaystyle -\, a_0 \int_{\varepsilon}^{\beta-\varepsilon}\f_2 v^6 \overline{v^1_x}dx+o(|\la|^{-\ell}),
			\end{array}
	\end{equation*}
using integration by parts and the fact that $\f_2 (\varepsilon)=\f_2 (\beta-\varepsilon)=0$, we get 
\begin{equation*}
\begin{array}{lll}
\displaystyle l(k_1+k_3) \int_{\varepsilon}^{\beta-\varepsilon}\f_2 |v^1_x|^2 dx=-	i\la \rho_1 \int_{\varepsilon}^{\beta-\varepsilon} \f_2 v^6\overline{v^1_x}dx +k_3\int_{\varepsilon}^{\beta-\varepsilon} \f_2 v^5_{x}\overline{v^1_{xx} }dx +k_3\int_{\varepsilon}^{\beta-\varepsilon} \f_2^\prime v^5_{x}\overline{v^1_{x} }dx\vspace{0.25cm}\\
\hspace{4.5cm} \displaystyle -\, lk_1 \int_{\varepsilon}^{\beta-\varepsilon}\f_2 (v^3+lv^5)\overline{v^1_x}dx- a_0 \int_{\varepsilon}^{\beta-\varepsilon}\f_2 v^6 \overline{v^1_x}dx+o(|\la|^{-\ell}).
\end{array}
\end{equation*}
Using the above equation, Lemmas \ref{p4-1stlemps}-\ref{p4-lem2} with $\ell\in\{2,4\}$, and the fact that $v^1_x$ is uniformly bounded in $L^2(0,L)$, $\|v^1_{xx}\|=O(|\la|)$, $\|v^3\|=O(|\la|^{-1})$, we obtain
$$
l(k_1+k_3) \int_{\varepsilon}^{\beta-\varepsilon}\f_2 |v^1_x|^2 dx=\frac{o(1)}{|\la|^{\frac{\ell}{4}-\frac{1}{2}}}.
$$
Thus, from the above estimation, the definition of $\f_2$ and the fact that $\frac{\ell}{4}-\frac{1}{2}\in\left\{0,\frac{1}{2} \right\}$, we obtain the first estimation in \eqref{p4-4.12}. Now, 
Multiplying \eqref{p4-4.18*} by $\f_3\overline{v^1}$, integrating over $(2\varepsilon,\beta-2\varepsilon)$, using integration by parts and the definition of $\mathsf{f}_3$, then using the fact that $\|v^1\|=O(|\la|^{-1})$, $\|f^1\|=o(1)$ and $\|f^2\|=o(1)$, we get 
\begin{equation}\label{p4-4.18}
\begin{array}{lll}
\displaystyle \rho_1 \int_{2\varepsilon}^{\beta-2\varepsilon}\f_3\left|\la v^1\right|^2 dx =k_1 \int_{2\varepsilon}^{\beta-2\varepsilon}\f_3^{\prime}(v^1_x+v^3+lv^5)\overline{v^1}dx +k_1 \int_{2\varepsilon}^{\beta-2\varepsilon}\f_3 \left|v^1_x\right|^2dx \vspace{0.25cm}\\ \displaystyle  +\,k_1 \int_{2\varepsilon}^{\beta-2\varepsilon}\f_3 (v^3+lv^5)\overline{v^1_x }dx-lk_3 \int_{2\varepsilon}^{\beta-2\varepsilon}\f_3 v^5_x \overline{v^1}dx +l^2 k_3 \int_{2\varepsilon}^{\beta-2\varepsilon}\f_3 \left|v^1\right|^2dx+o(|\la|^{-\ell+1}).
\end{array}
\end{equation}
From \eqref{p4-4.18}, Lemma \ref{p4-lem2}, the first estimation in \eqref{p4-4.12}, and the fact that $\|v^1\|=O(|\la|^{-1})$, $\|v^3\|=O(|\la|^{-1})$, $\|v^5\|=O(|\la|^{-1})$ and $\ell \in \{2,4\}$, we obtain
\begin{equation}
	\rho_1 \int_{2\varepsilon}^{\beta-2\varepsilon}\f_3\left|\la v^1\right|^2 dx=o(1).
\end{equation}
Finally, from the above estimation and the definition of $\f_3$, we obtain the second estimation desired. The proof is thus complete.
\end{proof}
\begin{lem}\label{p4-lem4}
	{\rm    
	If \eqref{H1}  holds, then the solution $U=(v^1,v^2,v^3,v^4,v^5,v^6)^{\top}\in D(\AA)$ of  \eqref{p3-f1ps}-\eqref{p3-f6ps} satisfies the following estimations
		\begin{equation}\label{p4-4.19}
		\int_{2\varepsilon}^{\beta-2\varepsilon} |v^1_x|^2 dx =o(1) \ \ \text{and} \ \ \int_{2\varepsilon}^{\beta-2\varepsilon}|\la v^1|^2 dx =o(1).
		\end{equation} 
	}
\end{lem}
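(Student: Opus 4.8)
The plan is to prove the two bounds in \eqref{p4-4.19} by replacing the damping–propagation mechanism of Lemma \ref{p4-lem3} with the equal–speed structure furnished by \eqref{H1}; throughout, set $k:=k_1=k_3$ and $c^2:=k_1/\rho_1=k_2/\rho_2$. The reason Lemma \ref{p4-lem3} does not apply is that it relies on Lemma \ref{p4-1stlemps}, which for $\ell=0$ only yields $\int_0^\beta|v^6|^2\,dx=o(1)$. Consequently, testing \eqref{p3-f6ps} against $\f_2\overline{v^1_x}$ produces the term $i\la\rho_1\int_0^\beta\f_2 v^6\overline{v^1_x}\,dx$, which is only $o(|\la|)$; substituting $v^6=i\la v^5-f^5$ and using $\|v^5\|_{L^2(0,\beta)}=o(|\la|^{-1})$ merely replaces it by $\la^2\rho_1\int_0^\beta\f_2 v^5\overline{v^1_x}\,dx$, again $o(|\la|)$. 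Thus the first estimate in \eqref{p4-4.19} cannot be obtained by routing information through the weakly damped pair $(v^5,v^6)$, and the coupling between the $\varphi$– and $\psi$–equations must be used instead.

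First I would reduce, on the interior interval $(\varepsilon,\beta-\varepsilon)$, to the Timoshenko subsystem in $(v^1,v^3)$. There $a(x)\equiv a_0$, and by Lemmas \ref{p4-1stlemps}–\ref{p4-lem2} the quantities $v^5$, $v^5_x$, $v^6$ are $o(1)$ (with $v^5=o(|\la|^{-1})$). Inserting these into \eqref{p4-4.18*}–\eqref{p4-4.18**} and using $k_1=k_3=k$, the two equations become
\begin{equation*}
\la^2\rho_1 v^1+k\,v^1_{xx}+k\,v^3_x=G_1,\qquad
\la^2\rho_2 v^3+k_2 v^3_{xx}-k\,v^1_x-k\,v^3=G_3,
\end{equation*}
where $G_1,G_3$ gather the small $v^5$–contributions and the source terms $-\rho_1 f^2-i\rho_1\la f^1$ and $-\rho_2 f^4-i\rho_2\la f^3$. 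Although $\|G_1\|,\|G_3\|$ are only $O(|\la|)$, the dangerous parts $\la f^1,\la f^3$ are harmless in every integral where they can be integrated by parts: since $f^1,f^3\in H^1_0(0,L)$ with $\|f^1\|_{H^1},\|f^3\|_{H^1}=o(1)$, each factor $\la f^j$ tested against a derivative can be transferred onto $f^j_x$ (or onto $\f_2'$) paired with a quantity of size $O(|\la|^{-1})$, hence $o(1)$.

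The main step is the equal–speed multiplier identity. I would test the first reduced equation against $\rho_2\f_2\overline{v^3_x}$ and the second against $\rho_1\f_2\overline{v^1_x}$, integrate over $(\varepsilon,\beta-\varepsilon)$, and add the real parts. Because $k\rho_2=k_2\rho_1=\rho_1\rho_2 c^2$, after a single integration by parts the two top–order contributions $\rho_1\rho_2 c^2\,\Re\int\f_2 v^1_{xx}\overline{v^3_x}\,dx$ and $-\rho_1\rho_2 c^2\,\Re\int\f_2 v^3_x\overline{v^1_{xx}}\,dx$ cancel identically; this is precisely the cancellation that fails when the speeds differ. What remains is a relation among $\int\f_2|v^1_x|^2\,dx$, $\int\f_2|v^3_x|^2\,dx$, the cross term $\Re\int\f_2 v^3\overline{v^1_x}\,dx$, and remainders supported on $\operatorname{supp}\f_2'$. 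In parallel I would write the two energy identities obtained by testing the reduced equations against $\f_2\overline{v^1}$ and $\f_2\overline{v^3}$, which control $\la^2\int\f_2|v^1|^2\,dx$ and $\la^2\int\f_2|v^3|^2\,dx$ in terms of the same potential and cross terms. Eliminating $\Re\int\f_2 v^3\overline{v^1_x}\,dx$ between these identities should produce $\int\f_2|v^1_x|^2\,dx=o(1)$, and hence, by the definition of $\f_2$, the first bound in \eqref{p4-4.19}. The second bound then follows exactly as in Lemma \ref{p4-lem3}: testing \eqref{p4-4.18*} against $\f_3\overline{v^1}$ gives the identity \eqref{p4-4.18}, whose right–hand side is now $o(1)$, since $\int\f_3|v^1_x|^2\,dx=o(1)$, the terms $v^3,v^5$ are $O(|\la|^{-1})$, and the forcing is paired with $v^1=O(|\la|^{-1})$.

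I expect the main obstacle to be the cross term $\Re\int\f_2 v^3\overline{v^1_x}\,dx$ together with the remainders living on $\operatorname{supp}\f_2'$. A priori $\la v^1$ and $\la v^3$ are only $O(1)$, so the quantity $\la^2\int\f_2' v^1\overline{v^3}\,dx$ that appears after the equal–speed cancellation is merely bounded, not small; closing the argument therefore requires both the cancellation and a careful use of the nested cut–offs $\f_j$, whose derivatives are supported in the thin boundary layers $[(j-1)\varepsilon,j\varepsilon]\cup[\beta-j\varepsilon,\beta-(j-1)\varepsilon]$, so that each $\operatorname{supp}\f_j'$–remainder can be absorbed by an estimate already established on a slightly larger interval.
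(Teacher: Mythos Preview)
Your route has a genuine gap. Testing the reduced first and second equations against $\rho_2\f_2\overline{v^3_x}$ and $\rho_1\f_2\overline{v^1_x}$ and adding the real parts (with the $k_1/\rho_1=k_2/\rho_2$ cancellation you describe) leaves, up to $o(1)$,
\[
k\rho_2\int\f_2|v^3_x|^2\,dx \;-\; k\rho_1\int\f_2|v^1_x|^2\,dx \;-\; \rho_1\rho_2\,\la^2\Re\int\f_2' v^1\overline{v^3}\,dx \;-\; k_2\rho_1\Re\int\f_2' v^3_x\overline{v^1_x}\,dx .
\]
Two things fail. The ``coercive'' part is a \emph{difference} of non-negative quantities, so even if the right side were $o(1)$ you could not conclude that either integral is small; the two energy identities you mention only add the relations $\rho_1\int\f_2|\la v^1|^2\approx k\int\f_2|v^1_x|^2$ and $\rho_2\int\f_2|\la v^3|^2\approx k_2\int\f_2|v^3_x|^2$, which are still compatible with everything being $O(1)$. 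Second, the $\f_2'$-remainders involve $\la v^1$, $\la v^3$, $v^1_x$, $v^3_x$, none of which is yet known to be small on \emph{any} subinterval, so the nested cut-off trick cannot absorb them---there is no ``slightly larger interval'' on which these are controlled. (The cross term $\Re\int\f_2 v^3\overline{v^1_x}$ you plan to eliminate is $O(|\la|^{-1})$ and plays no role.) In short, routing the argument through the $\psi$-equation provides no smallness input: that coupling is the right tool for bounding $v^3$ \emph{after} $v^1$ is controlled---this is exactly Lemma~\ref{p3-3rdlem}---but it cannot bootstrap the estimate for $v^1$ itself.

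The paper instead couples the first and \emph{third} equations and uses the other half of \eqref{H1}, namely $k_1=k_3$: multiply \eqref{p4-4.18*} by $\f_2(\overline{v^5_x}-l\overline{v^1})$ and \eqref{p4-4.18***} by $\f_2\overline{v^1_x}$, take real parts and add. The $\la^2$-cross terms $\la^2\rho_1\Re\int\f_2 v^1\overline{v^5_x}$ and $-\la^2\rho_1\Re\int\f_2 v^5_x\overline{v^1}$ cancel by symmetry, the second-order cross terms $k_1\Re\int\f_2 v^1_{xx}\overline{v^5_x}$ and $-k_3\Re\int\f_2 v^5_x\overline{v^1_{xx}}$ cancel because $k_1=k_3$, and---crucially---every $\f_2'$-remainder now carries a factor of $v^5$, $v^5_x$ or $v^6$, already $o(1)$ on $(\varepsilon,\beta-\varepsilon)$ by Lemmas~\ref{p4-1stlemps}--\ref{p4-lem2}. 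What survives is the \emph{positive} combination
\[
l\rho_1\int\f_2|\la v^1|^2\,dx + lk_1\int\f_2|v^1_x|^2\,dx = o(1),
\]
which gives both bounds in \eqref{p4-4.19} at once on $(2\varepsilon,\beta-2\varepsilon)$.
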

		\begin{proof}
			First, take $\ell=0$ in \eqref{p4-4.18*} and multiply it
			 by  $\f_2 (\overline{v^5_x}-l\overline{v^1})$, integrating over $(\varepsilon,\beta-\varepsilon)$, and taking the real part, we get
			\begin{equation*}
			\begin{array}{lll}
	\displaystyle	\Re \left\{	\la^2 \rho_1 \int_{\varepsilon}^{\beta-\varepsilon}\f_2 v^1 (\overline{v^5_x}-l\overline{v^1})dx +k_1\int_{\varepsilon}^{\beta-\varepsilon}\f_2 v^1_{xx}(\overline{v^5_x}-l\overline{v^1})dx+k_1 \int_{\varepsilon}^{\beta-\varepsilon}\f_2 (v^3_x+lv^5_x)(\overline{v^5_x}-l\overline{v^1})dx\right.\vspace{0.25cm}\\
	\hspace{1cm}	\displaystyle \left. +\,lk_3\int_{\varepsilon}^{\beta-\varepsilon}\f_2 |v^5_x-lv^1|^2 dx \right\}=\underbrace{\Re\left\{-\rho_1 \int_{\varepsilon}^{\beta-\varepsilon}\f_2 f^2(\overline{v^5_x}-l\overline{v^1})dx-i\la\rho_1 \int_{\varepsilon}^{\beta-\varepsilon}\f_2 f^1(\overline{v^5_x}-l\overline{v^1})dx\right\}}_{:=\mathtt{I}_1},
		\end{array}
			\end{equation*}
			using integration by parts and the fact that $\f_2 (\varepsilon)=\f_2(\beta-\varepsilon)=0$, then using Lemmas \ref{p4-1stlemps}-\ref{p4-lem2} with $\ell =0$ and the fact that $\|v^1\|=O(|\la|^{-1})$, $\|f^1\|=o(1)$, $\|f^1_x\|=o(1)$, we get
			\begin{equation*}
			\mathtt{I}_1 = \Re\left\{-\rho_1 \int_{\varepsilon}^{\beta-\varepsilon}\f_2 f^2(\overline{v^5_x}-l\overline{v^1})dx+i\la\rho_1 \int_{\varepsilon}^{\beta-\varepsilon}\f_2 f^1_x\overline{v^5}dx +i\la\rho_1\int_{\varepsilon}^{\beta-\varepsilon}\f_2^\prime  f^1\overline{v^5}dx+i\la l\rho_1\int_{\varepsilon}^{\beta-\varepsilon}\f_2 f^1 \overline{v^1}dx\right\}=o(1),
			\end{equation*}
			consequently, we obtain
				\begin{equation}\label{p4-4.20}
			\begin{array}{lll}
			\displaystyle	\Re \left\{	\la^2 \rho_1 \int_{\varepsilon}^{\beta-\varepsilon}\f_2 v^1 \overline{v^5_x}dx-l\rho_1 \int_{\varepsilon}^{\beta-\varepsilon}\f_2 |\la v^1|^2 dx +k_1\int_{\varepsilon}^{\beta-\varepsilon}\f_2 v^1_{xx}\overline{v^5_x}dx-lk_1 \int_{\varepsilon
			}^{\beta-\varepsilon}\f_2 v^1_{xx}\overline{v^1}dx\right.\vspace{0.25cm}\\
			\hspace{1cm}	\displaystyle \left.+\,k_1 \int_{\varepsilon}^{\beta-\varepsilon}\f_2 (v^3_x+lv^5_x)(\overline{v^5_x}-l\overline{v^1})dx +lk_3 \int_{\varepsilon}^{\beta-\varepsilon}\f_2 |v^5_x-lv^1|dx \right\}=o(1).
			\end{array}
			\end{equation}
			Now, multiplying \eqref{p3-f3ps} by $\f_2 \overline{v^1_x}$, 
integrating over $(\varepsilon,\beta-\varepsilon)$, taking the real part, we get
			\begin{equation*}\label{p4-4.21}
				\begin{array}{lll}
				\displaystyle \Re \left\{\la^2 \rho_1 \int_{\varepsilon}^{\beta-\varepsilon}\f_2 v^5\overline{v^1_x}dx+k_3 \int_{\varepsilon}^{\beta-\varepsilon}\f_2 v^5_{xx}\overline{v^1_x}dx -l(k_1+k_3)\int_{\varepsilon}^{\beta-\varepsilon}\f_2 |v^1_x|^2 dx -lk_1\int_{\varepsilon}^{\beta-\varepsilon}\f_2 (v^3+lv^5)\overline{v^1_x}dx\right.\vspace{0.25cm}\\
			\hspace{2cm}	\displaystyle \left. -\, a_0 \int_{\varepsilon}^{\beta-\varepsilon}\f_2 v^6\overline{v^1_x}dx\right\}= \underbrace{\Re\left\{-\rho_1 \int_{\varepsilon}^{\beta-\varepsilon}\f_2 f^6\overline{v^1_x}dx-i\la\rho_1 \int_{\varepsilon}^{\beta-\varepsilon}\f_2 f^5\overline{v^1_x}dx\right\}}_{:=\mathtt{I}_2},
				\end{array}
			\end{equation*}
				using integration by parts and the fact that $\f_2 (\varepsilon)=\f_2(\beta-\varepsilon)=0$, then using the fact that $v^1_x$ is uniformly bounded in $L^2(0,L)$, $\|v^1\|=O(|\la|^{-1})$, $\|f^5\|=o(1)$, $\|f^5_x\|=o(1)$, $\|f^6\|=o(1)$, we get
			\begin{equation*}
			\mathtt{I}_2 = \Re\left\{-\rho_1 \int_{\varepsilon}^{\beta-\varepsilon}\f_2 f^6\overline{v^1_x}dx+i\la\rho_1 \int_{\varepsilon}^{\beta-\varepsilon}\f_2 f^5_x\overline{v^1}dx +i\la\rho_1\int_{\varepsilon}^{\beta-\varepsilon}\f_2^\prime  f^5\overline{v^1}dx\right\}=o(1),
			\end{equation*}
			consequently, by using integration by parts in \eqref{p4-4.21} and the fact that $\f_2 (\varepsilon)=\f_2(\beta-\varepsilon)=0$, we obtain 
				\begin{equation}\label{p4-4.22}
			\begin{array}{lll}
			\displaystyle \Re \left\{-\la^2 \rho_1 \int_{\varepsilon}^{\beta-\varepsilon}\f_2 v^5_x\overline{v^1}dx-\la^2 \rho_1 \int_{\varepsilon}^{\beta-\varepsilon}\f_2^\prime v^5\overline{v^1}dx-k_3 \int_{\varepsilon}^{\beta-\varepsilon}\f_2 v^5_{x}\overline{v^1_{xx}}dx-k_3\int_{\varepsilon}^{\beta-\varepsilon}\f_2^\prime v^5_{x}\overline{v^1_x}dx  \right.\vspace{0.25cm}\\
			\hspace{1cm}	\displaystyle \left. -\,l(k_1+k_3)\int_{\varepsilon}^{\beta-\varepsilon}\f_2 |v^1_x|^2 dx - lk_1 \int_{\varepsilon}^{\beta-\varepsilon}\f_2 (v^3+lv^5)\overline{v^1_x}dx- a_0 \int_{\varepsilon}^{\beta-\varepsilon}\f_2 v^6\overline{v^1_x}dx\right\}=o(1).
			\end{array}
			\end{equation}
			Adding \eqref{p4-4.20} and \eqref{p4-4.22} and using the fact that $k_1=k_3$, we get 
					\begin{equation*}\label{p4-4.24}
			\begin{array}{lll}
 		\displaystyle l\rho_1 \int_{\varepsilon}^{\beta-\varepsilon}\f_2 |\la v^1|^2 dx +2lk_1  \int_{\varepsilon}^{\beta-\varepsilon}\f_2|v^1_x|^2 dx +\underbrace{lk_1 \int_{\varepsilon}^{\beta-\varepsilon}\f_2v^1_{xx}\overline{v^1}dx}_{:=\mathtt{I}_3}=\Re\left\{ lk_3  \int_{\varepsilon}^{\beta-\varepsilon}\f_2 |v^5_x-lv^1|dx\right.\vspace{0.25cm}\\
	\displaystyle 	 \left.+\, k_1  \int_{\varepsilon}^{\beta-\varepsilon}\f_2 (v^3_x+lv^5_x)(\overline{v^5_x}-l\overline{v^1})dx -\la^2 \rho_1  \int_{\varepsilon}^{\beta-\varepsilon}\f_2^\prime v^5\overline{v^1}dx-k_3  \int_{\varepsilon}^{\beta-\varepsilon}\f_2^\prime v^5_{x}\overline{v^1_x}dx\right.\vspace{0.25cm}\\
\left.	\displaystyle -\, lk_1  \int_{\varepsilon}^{\beta-\varepsilon}\f_2 (v^3+lv^5)\overline{v^1_x}dx- a_0  \int_{\varepsilon}^{\beta-\varepsilon}\f_2 v^6\overline{v^1_x}dx\right\}+o(1),
			\end{array}
			\end{equation*}
			using Lemmas \ref{p4-1stlemps}-\ref{p4-lem2} with $\ell=0$ and the fact that $v^1_x$, $v^3_x$ are uniformly bounded in $L^2(0,L)$ and $\|v^1\|=O(|\la|^{-1})$, $\|v^3\|=O(|\la|^{-1})$, we get 
		\begin{equation}\label{p4-4.23}
			l\rho_1 \int_{\varepsilon}^{\beta-\varepsilon}\f_2 |\la v^1|^2 dx +2lk_1  \int_{\varepsilon}^{\beta-\varepsilon}\f_2|v^1_x|^2 dx+\mathtt{I}_3 =o(1).
		\end{equation}
	Now,  using integration by parts and the fact that $\f_2 (\varepsilon)=\f_2 (\beta-\varepsilon)=0$, then using the fact that $v^1_x$ is uniformly bounded in $L^2 (0,L)$, $\|v^1\|=O(|\la|^{-1})$, we get
		\begin{equation}\label{p4-4.23*}
		\mathtt{I}_3=-lk_1 \int_{\varepsilon}^{\beta-\varepsilon}\f_2 |v^1_x|^2dx-lk_1 \int_{\varepsilon}^{\beta-\varepsilon}\f_2^\prime v^1_x\overline{v^1}=-lk_1 \int_{\varepsilon}^{\beta-\varepsilon}\f_2 |v^1_x|^2dx+o(1).
		\end{equation}
			Inserting \eqref{p4-4.23*} in \eqref{p4-4.23}, we obtain
			$$
				l\rho_1 \int_{\varepsilon}^{\beta-\varepsilon}\f_2 |\la v^1|^2 dx +lk_1  \int_{\varepsilon}^{\beta-\varepsilon}\f_2|v^1_x|^2 dx =o(1).
			$$ 
			Finally, from the above estimation and the definition of $\f_2$, we obtain \eqref{p4-4.19}. The proof is thus complete.
\end{proof}

\begin{lem}\label{p3-4thlemma}
 	{\rm If  \eqref{H3} holds. then the solution $U=(v^1,v^2,v^3,v^4,v^5,v^6)^{\top}\in D(\AA)$ of  \eqref{p3-f1ps}-\eqref{p3-f6ps} satisfies the following estimation
	\begin{equation}\label{p4-4.37}
	\int_{4\varepsilon}^{\beta-4\varepsilon}|\la v^1|^2 dx =o(\la^{-2}) \quad \text{and}\quad \int_{4\varepsilon}^{\beta-4\varepsilon}|v^1_x|^2 dx =o(\la^{-2}).
	\end{equation}
}
\end{lem}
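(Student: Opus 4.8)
The plan is to follow the same bootstrapping strategy that proved the earlier lemmas, now under the stronger hypothesis \eqref{H3} with $\ell=4$, but pushing the localization one cut-off level further (from $\mathsf{f}_3$ to $\mathsf{f}_4$, i.e. from the window $(3\varepsilon,\beta-3\varepsilon)$ to $(4\varepsilon,\beta-4\varepsilon)$) and sharpening the decay rate from $o(1)$ to $o(\la^{-2})$. The key point is that Lemma \ref{p4-lem3} already gives $\int_{3\varepsilon}^{\beta-3\varepsilon}|\la v^1|^2\,dx=o(1)$, and the goal here is to gain two extra powers of $|\la|^{-1}$ by exploiting the second equation \eqref{p4-4.18**} for $v^3$, which has so far been used only crudely. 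First I would multiply \eqref{p4-4.18**} by $\mathsf{f}_4\overline{v^3}$ and integrate over $(3\varepsilon,\beta-3\varepsilon)$; after integration by parts (using $\mathsf{f}_4(3\varepsilon)=\mathsf{f}_4(\beta-3\varepsilon)=0$) this expresses $\rho_2\int \mathsf{f}_4|\la v^3|^2$ in terms of $\int\mathsf{f}_4|v^3_x|^2$ together with cross terms in $v^1_x$, $v^5$ and the (small) right-hand side $\la^{-\ell}f^4+\la^{-\ell+1}f^3$.

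The crucial mechanism is the \emph{difference of wave speeds}: since \eqref{H3} assumes $\frac{k_1}{\rho_1}\neq\frac{k_2}{\rho_2}$, the coefficient $\bigl(\frac{k_1}{\rho_1}-\frac{k_2}{\rho_2}\bigr)$ is a nonzero constant that can be divided through. Concretely, I would form a suitable linear combination of the localized identity coming from \eqref{p4-4.18*} (the $v^1$ equation) and that coming from \eqref{p4-4.18**} (the $v^3$ equation), chosen so that the leading $\la^2$-order terms in $v^3$ cancel and one is left with $\bigl(\frac{k_1}{\rho_1}-\frac{k_2}{\rho_2}\bigr)\int_{3\varepsilon}^{\beta-3\varepsilon}\mathsf{f}_4|\la v^1|^2\,dx$ on one side, plus controllable remainders on the other. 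The inputs I would feed into the remainders are: the first estimate of \eqref{p4-4.12}, namely $\int_{2\varepsilon}^{\beta-2\varepsilon}|v^1_x|^2\,dx=o(1)$; the second estimate of \eqref{p4-4.12}, $\int_{3\varepsilon}^{\beta-3\varepsilon}|\la v^1|^2\,dx=o(1)$; the $v^5$-estimate \eqref{p4-4.15} from Lemma \ref{p4-lem2} with $\ell=4$; and the a priori bounds $\|v^1\|,\|v^3\|,\|v^5\|=O(|\la|^{-1})$, $\|v^i_{xx}\|=O(|\la|)$ recorded after \eqref{p4-4.18***}. Dividing by the nonzero speed gap then yields $\int_{4\varepsilon}^{\beta-4\varepsilon}|\la v^1|^2\,dx=o(\la^{-2})$, which is the first estimate in \eqref{p4-4.37}.

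Once the $|\la v^1|^2$ bound is established, the second estimate $\int_{4\varepsilon}^{\beta-4\varepsilon}|v^1_x|^2\,dx=o(\la^{-2})$ should follow by re-running the computation of Lemma \ref{p4-lem3}: multiply \eqref{p4-4.18*} by $\mathsf{f}_4\overline{v^1}$, integrate over $(3\varepsilon,\beta-3\varepsilon)$, integrate by parts, and solve for $k_1\int\mathsf{f}_4|v^1_x|^2$. The difference now is that the term $\rho_1\int\mathsf{f}_4|\la v^1|^2$ appearing in that identity is no longer merely $o(1)$ but $o(\la^{-2})$ by the first part just proved, and all the remaining cross terms (involving $v^3$, $v^5$, $v^5_x$ and the forcing) are already $o(\la^{-2})$ by the sharpened estimates and the $\ell=4$ scaling. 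Restricting to the window where $\mathsf{f}_4\equiv1$ then gives the claim.

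The main obstacle I anticipate is bookkeeping the powers of $|\la|$ so that everything genuinely lands at order $\la^{-2}$ rather than a weaker rate: the boundary/commutator terms generated by the cut-off derivatives $\mathsf{f}_4'$ and $\mathsf{f}_4''$ pair a factor like $\la^2$ (from the $\la^2\rho$ terms) against products of $v^5$ and $v^1$ that are each only $O(|\la|^{-1})$, so naively such a term is $O(1)$ and would destroy the estimate. The resolution must be that these commutator terms always carry either an extra $v^5$ supported where \eqref{p4-4.15} gives smallness, or a factor of the small forcing $f^j$, or they combine through integration by parts into quantities already controlled by \eqref{p4-4.12}; verifying that \emph{every} such term is absorbed into $o(\la^{-2})$ — and in particular that no $\la^2\cdot O(|\la|^{-1})\cdot O(|\la|^{-1})=O(1)$ term survives uncancelled — is the delicate part and is exactly where the nonvanishing of the speed gap $\frac{k_1}{\rho_1}-\frac{k_2}{\rho_2}$ must be used to cancel the dangerous leading terms before dividing.
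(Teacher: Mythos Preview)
Your proposal has a genuine gap: the mechanism you describe does not produce the estimate, and it is not the mechanism the paper uses.

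You propose to combine the $v^1$-equation \eqref{p4-4.18*} and the $v^3$-equation \eqref{p4-4.18**} so as to isolate $\bigl(\tfrac{k_1}{\rho_1}-\tfrac{k_2}{\rho_2}\bigr)\int\mathsf{f}_4|\la v^1|^2\,dx$. But the speed gap does not arise that way. When one cross-multiplies \eqref{p4-4.18*} by $\rho_1^{-1}\overline{v^3_x}$ and \eqref{p4-4.18**} by $\rho_2^{-1}\overline{v^1_x}$ and adds (this is exactly the computation in Step~1 of Lemma~\ref{p3-3rdlem}), the speed gap shows up as the coefficient of $\int\mathsf{f}_j v^1_{xx}\overline{v^3_x}\,dx$, not of $\int|\la v^1|^2\,dx$; the diagonal $\la^2$-terms carry $v^1\overline{v^3_x}$ and $v^3\overline{v^1_x}$, which after integration by parts \emph{cancel} up to cut-off commutators rather than producing a $|\la v^1|^2$ term. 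There is no linear combination of these two equations, tested against $\overline{v^1}$, $\overline{v^3}$, $\overline{v^1_x}$, $\overline{v^3_x}$, that yields $\bigl(\tfrac{k_1}{\rho_1}-\tfrac{k_2}{\rho_2}\bigr)\int|\la v^1|^2$ on the left. Moreover, even if one could trade the problem for control of $v^3$ or $v^3_x$, those quantities are only known to be $O(|\la|^{-1})$ and bounded respectively at this stage (the $o(1)$ estimates on $v^3_x$ are proved \emph{after} the present lemma, using it as input), so they could not be absorbed into $o(\la^{-2})$.

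The paper's proof uses a completely different coupling: the curvature term linking $v^1$ to the strongly damped $v^5$. One multiplies \eqref{p4-4.18*} by $l\mathsf{f}_4\overline{v^1}$, obtaining $l\rho_1\int\mathsf{f}_4|\la v^1|^2$ together with (after integration by parts) the term $-\int l k_1\mathsf{f}_4(v^1_x+v^3+lv^5)\overline{v^1_x}$. The point is that \eqref{p4-4.18***} lets one replace $-lk_1(v^1_x+v^3+lv^5)$ by $-\la^2\rho_1 v^5-k_3(v^5_x-lv^1)_x+a(x)v^6+\ldots$, turning the dangerous cross term into quantities involving $v^5$ and $v^6$, which under $\ell=4$ satisfy $\int_0^\beta|v^5|^2=o(\la^{-6})$ and $\int_0^\beta|v^6|^2=o(\la^{-4})$. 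After a further identity (Step~2, multiply \eqref{p4-4.18*} by $\tfrac{k_3}{k_1}\mathsf{f}_4\overline{v^5_x}$) the two residual cross terms combine into $\rho_1\la^2\bigl(1-\tfrac{k_3}{k_1}\bigr)\int\mathsf{f}_4 v^5\overline{v^1_x}$, which Young's inequality and the $o(\la^{-6})$ bound on $v^5$ absorb into $o(\la^{-2})$. The hypothesis $\tfrac{k_1}{\rho_1}\neq\tfrac{k_2}{\rho_2}$ is not used at all in this lemma; it enters only in the subsequent control of $v^3_x$ (Lemma~\ref{p3-3rdlem}, Step~4). Your plan for the second estimate (re-running the $\mathsf{f}_4\overline{v^1}$ multiplier once the first is known) is closer in spirit, but in the paper both estimates fall out simultaneously from the single identity above, since it already contains $lk_3\int\mathsf{f}_4|v^1_x|^2$ on the left.
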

\begin{proof} 
	For clarity, we divide the proof into three steps:\\\linebreak
	\textbf{Step 1}: In this step, we will prove that:
	\begin{equation}\label{p4-4.38}
	\begin{array}{lll}
	\displaystyle	l\rho_1 \int_{3\varepsilon}^{\beta-3\varepsilon} \f_4 |\la v^1|^2 dx+lk_3 \int_{3\varepsilon}^{\beta-3\varepsilon}\f_4 |v^1_x|^2dx+\Re\left\{-\rho_1 \la^2 \int_{3\varepsilon}^{\beta-3\varepsilon}\f_4 v^5\overline{v^1_x}dx+k_3\int_{3\varepsilon}^{\beta-3\varepsilon}\f_4 v^5_x \overline{v^1_{xx}}dx\right\}\vspace{0.25cm}\\
=\displaystyle\frac{o(1)}{\la^2}.
	\end{array}
\end{equation}	
For this aim, take $\ell=4$ in \eqref{p4-4.18*} and multiply it by $l\f_4 \overline{v^1} $, integrating over $(3\varepsilon,\beta-3\varepsilon)$, using the fact that $\|v^1\|=o(|\la|^{-1})$, $\|f^1\|=o(1)$ and $\|f^2\|=o(1)$, then taking the real part, we get 
	\begin{equation*}\label{p3-4.31}
	\begin{array}{lll}
	\displaystyle 	l\rho_1  \int_{3\varepsilon}^{\beta-3\varepsilon} \f_4 \left|\la v^1 \right|^2 dx +\underbrace{\Re \left\{lk_1 \int_{3\varepsilon}^{\beta-3\varepsilon} \f_4 (v^1_x +v^3 +lv^5)_x \overline{v^1 }dx  \right\}}_{:= \mathtt{I}_4} \displaystyle +\, \Re \left\{l^2k_3 \int_{3\varepsilon}^{\beta-3\varepsilon} \f_4 (v^5_x -lv^1 )\overline{v^1}dx  \right\}=\frac{o(1)}{\la^4}.
	\end{array}
	\end{equation*}
	From the above estimation, Lemma \ref{p4-lem2}-\ref{p4-lem3} with $\ell=4$, we obtain
	\begin{equation}\label{p4-4.39}
		l\rho_1 \int_{3\varepsilon}^{\beta-3\varepsilon} \f_4 |\la v^1|^2 dx +\mathtt{I}_4=\frac{o(1)}{\la^2}.
	\end{equation}
	Using integration by parts and the definition of $\f_4$, we obtain
	\begin{equation}\label{p3-4.32}
	\begin{array}{lll}
\displaystyle 	\mathtt{I}_4=-\Re \left\{lk_1 \int_{2\varepsilon}^{\beta-2\varepsilon} \g^{\prime} (v^1_x +v^3 +lv^5) \overline{v^1 }dx  \right\}\ -\Re \left\{lk_1 \int_{2\varepsilon}^{\beta-2\varepsilon} \g (v^1_x +v^3 +lv^5) \overline{v^1_x }dx  \right\}\vspace{0.25cm}\\
\hspace{0.5cm}\displaystyle =-\frac{lk_1}{2}\int_{2\varepsilon}^{\beta-2\varepsilon} \g^{\prime}\left(\left|v^1\right|^2\right)_x dx -\Re \left\{lk_1 \int_{2\varepsilon}^{\beta-2\varepsilon} \g^{\prime}v^3 \overline{v^1}dx  \right\}-\Re\left\{l^2 k_1 \int_{2\varepsilon}^{\beta-2\varepsilon} \g^{\prime}v^5 \overline{v^1}dx  \right\}\vspace{0.25cm}\\ 
\hspace{1cm}\displaystyle -\,\Re \left\{lk_1 \int_{2\varepsilon}^{\beta-2\varepsilon} \g (v^1_x +v^3 +lv^5) \overline{v^1_x }dx  \right\}.
	\end{array}
	\end{equation}
		Using integration by parts  and the fact that $\f_4^{\prime}(3\varepsilon)=\f_4^{\prime}(\beta-3\varepsilon)=0$, then using Lemma \ref{p4-lem3}, we obtain
	\begin{equation}\label{p3-4.33}
	-\frac{lk_1}{2}\int_{2\varepsilon}^{\beta-2\varepsilon} \g^{\prime}\left(\left|v^1\right|^2\right)_x dx =	\frac{lk_1}{2}\int_{2\varepsilon}^{\beta-2\varepsilon} \g^{\prime \prime}\left|v^1\right|^2 dx  =\frac{o(1)}{\la^2}.
	\end{equation}
		Using Lemma \ref{p4-lem3} with $\ell =4$ and the fact that $\|v^3\|=O(|\la|^{-1})$, $\|v^5\|=O(|\la|^{-1})$, we obtain
	\begin{equation}\label{p3-4.34}
	\begin{array}{lll}
	\displaystyle -\Re \left\{lk_1 \int_{2\varepsilon}^{\beta-2\varepsilon} \g^{\prime}v^3 \overline{v^1}dx  \right\}=o(\la^{-2}) \ \  \text{and} \ \ -\Re\left\{l^2 k_1 \int_{2\varepsilon}^{\beta-2\varepsilon} \g^{\prime}v^5 \overline{v^1}dx  \right\}=o(\la^{-2}).
	\end{array}
	\end{equation}
	Inserting \eqref{p3-4.33} and \eqref{p3-4.34} in \eqref{p3-4.32}, we obtain
\begin{equation}\label{p4-4.35}
	\mathtt{I}_4 =- \Re \left\{lk_1 \int_{2\varepsilon}^{\beta-2\varepsilon} \g (v^1_x +v^3 +lv^5) \overline{v^1_x }dx\right\}+o(\la^{-2}).
\end{equation}
	From \eqref{p4-4.18***}, we deduce that 
	$$
 -lk_1 (v^1_x+v^3+lv^5)=	-\la^2\rho_1 v^5 -k_3 (v^5_x-lv^1)_x+a(x)v^6-\rho_1 \la^{-\ell}f^6 -i\rho_1\la^{-\ell+1}  f^5,
	$$
Inserting the above equation in \eqref{p4-4.35}, then using the fact that $v^1_x$ is uniformly bounded in $L^2(0,L)$, $\|f^5\|=o(1)$. $\|f^6\|=o(1)$, we obtain 
\begin{equation}\label{p4-4.44}
	\mathtt{I}_4= \Re\left\{-\rho_1 \la^2 \int_{3\varepsilon}^{\beta-3\varepsilon}\f_4 v^5\overline{v^1_x}dx\right.\underbrace{-k_3 \int_{3\varepsilon}^{\beta-3\varepsilon}\f_4 (v^5_x-lv^1)_x\overline{v^1_x}dx}_{:=\mathtt{I}_5}+\left.a_0 \int_{3\varepsilon}^{\beta-3\varepsilon}\f_4 v^6\overline{v^1_x}dx \right\} +o(|\la|^{-3}).
\end{equation}
	Using integration by parts and the definition of $\f_4$, we obtain 
\begin{equation}\label{p4-4.45}
\begin{array}{lll}
	\mathtt{I}_5&=&\displaystyle -k_3 \int_{3\varepsilon}^{\beta-3\varepsilon}\f_4 v^5_{xx}\overline{v^1_x}dx+lk_3 \int_{3\varepsilon}^{\beta-3\varepsilon}\f_4 |v^1_x|^2 dx\vspace{0.25cm}\\  &=& \displaystyle k_3\int_{3\varepsilon}^{\beta-3\varepsilon}\f_4 v^5_x \overline{v^1_{xx}}dx+\underbrace{k_3\int_{3\varepsilon}^{\beta-3\varepsilon}\f_4^\prime v^5_x \overline{v^1_{x}}dx}_{:=\mathtt{I_6}}+lk_3 \int_{3\varepsilon}^{\beta-3\varepsilon}\f_4 |v^1_x|^2 dx,
	\end{array}
\end{equation}
	using integration by parts and the fact that $\f_4^{\prime}(3\varepsilon)=\f_4^\prime (\beta-3\varepsilon)=0$, then using Lemma \ref{p4-1stlemps} with $\ell=4$ and the fact that $v^1_x$ is uniformly bounded in $L^2(0,L)$, $\|v^1_{xx}\|=O(|\la|)$, we obtain 
\begin{equation}
\mathtt{I}_6 =-k_3 \int_{3\varepsilon}^{\beta-3\varepsilon}\f_4^{\prime\prime}v^5\overline{v^1_x}dx -k_3 \int_{3\varepsilon}^{\beta-3\varepsilon}\f_4^{\prime}v^5\overline{v^1_{xx}}dx =\frac{o(1)}{\la^2},
\end{equation}
consequently, we obtain
\begin{equation}\label{p4-4.38.}
	\mathtt{I}_5 =\displaystyle k_3\int_{3\varepsilon}^{\beta-3\varepsilon}\f_4 v^5_x \overline{v^1_{xx}}dx+lk_3 \int_{3\varepsilon}^{\beta-3\varepsilon}\f_4 |v^1_x|^2 dx+\frac{o(1)}{\la^2}.
\end{equation}
	Using Lemma \ref{p4-1stlemps} with $\ell=4$ and the fact that $v^1_x$ is uniformly bounded in $L^2(0,L)$, we obtain
	\begin{equation}\label{p4-4.46}
	a_0\int_{3\varepsilon}^{\beta-3\varepsilon}\f_4 v^6 \overline{v^1_x}dx=\frac{o(1)}{\la^2}.
	\end{equation}
Inserting \eqref{p4-4.38.} and \eqref{p4-4.46} in \eqref{p4-4.44}, we obtain
\begin{equation*}
	\mathtt{I}_4= \Re\left\{-\rho_1 \la^2 \int_{3\varepsilon}^{\beta-3\varepsilon}\f_4 v^5\overline{v^1_x}dx+k_3\int_{3\varepsilon}^{\beta-3\varepsilon}\f_4 v^5_x \overline{v^1_{xx}}dx\right\}+lk_3 \int_{3\varepsilon}^{\beta-3\varepsilon}\f_4 |v^1_x|^2dx+\frac{o(1)}{\la^2}.
\end{equation*}
Thus, by inserting the above equation in \eqref{p4-4.39}, we obtain \eqref{p4-4.38}.\\\linebreak
\textbf{Step 2}: In this step, we will prove that:
\begin{equation}\label{p4-4.51}
\Re\left\{k_3 \int_{3\varepsilon}^{\beta-3\varepsilon}\f_4 v^5_{x}\overline{v^1_{xx}}dx\right\}=\Re\left\{\frac{\la^2 \rho_1 k_3}{k_1} \int_{3\varepsilon}^{\beta-3\varepsilon}\f_4 v^5 \overline{v^1_x}dx\right\}+\frac{o(1)}{\la^2}.
\end{equation}
For this aim, take $\ell=4$ in \eqref{p4-4.18*} and multiply it by $\frac{k_3}{k_1}\f_4 \overline{v^5_x}$, integrating over $(3\varepsilon,\beta-3\varepsilon)$, then using the fact that $v^5_x$ is uniformly bounded in $L^2(0,L)$, $\|f^1\|=o(1)$, $\|f^2\|=o(1)$, we obtain
\begin{equation}
\begin{array}{lll}
\displaystyle	k_3 \int_{3\varepsilon}^{\beta-3\varepsilon}\f_4  v^1_{xx}\overline{v^5_x}dx =\underbrace{-\frac{\la^2 \rho_1 k_3}{k_1} \int_{3\varepsilon}^{\beta-3\varepsilon}\f_4 v^1 \overline{v^5_x}dx}_{:=\mathtt{I}_7}\ \ \underbrace{-k_3 \int_{3\varepsilon}^{\beta-3\varepsilon}\f_4 v^3_x\overline{v^5_x}dx}_{:=\mathtt{I}_8}\vspace{0.25cm}\\
	\displaystyle -\frac{k_3}{k_1}l(k_1+k_3) \int_{3\varepsilon}^{\beta-3\varepsilon}\f_4 |v^5_x|^2 dx +\frac{l^2 k_3^2}{k_1 }\int_{3\varepsilon}^{\beta-3\varepsilon}\f_4 v^1 \overline{v^5_x}dx +\frac{o(1)}{|\la|^{3}}.
	\end{array}
\end{equation}
From the above equation, Lemmas \ref{p4-lem2}-\ref{p4-lem3} with $\ell=4$, we obtain
\begin{equation}\label{p4-I67}
k_3 \int_{3\varepsilon}^{\beta-3\varepsilon}\f_4 v^1_{xx}\overline{v^5_x}dx=\mathtt{I}_7 +\mathtt{I}_8 +\frac{o(1)}{|\la|^{\frac{5}{2}}}.
\end{equation}
Using integration by parts and the definition of $\f_4$, then using Lemmas \ref{p4-1stlemps}, \ref{p4-lem3} with $\ell=4$, we obtain
\begin{equation}\label{p4-I6}
	\mathtt{I}_7 =\frac{\la^2 \rho_1 k_3}{k_1} \int_{3\varepsilon}^{\beta-3\varepsilon}\f_4 v^1_x \overline{v^5}dx+\frac{\la^2 \rho_1 k_3}{k_1} \int_{3\varepsilon}^{\beta-3\varepsilon}\f_4^\prime v^1 \overline{v^5}dx=\frac{\la^2 \rho_1 k_3}{k_1} \int_{3\varepsilon}^{\beta-3\varepsilon}\f_4 v^1_x \overline{v^5}dx+\frac{o(1)}{\la^2}.
\end{equation}
Using integration by parts and the definition of $\f_4$, then using Lemma \ref{p4-1stlemps} and the fact that $v^3_x$ is uniformly bounded in $L^2(0,L)$, $\|v^3_{xx}\|=O(|\la|)$, we get 
\begin{equation}\label{p4-I7}
\mathtt{I}_8= k_3 \int_{3\varepsilon}^{\beta-3\varepsilon}\f_4 v^3_{xx}\overline{v^5}dx+k_3 \int_{3\varepsilon}^{\beta-3\varepsilon}\f_4^\prime v^3_{x}\overline{v^5}dx=\frac{o(1)}{\la^2}.
\end{equation}
Inserting \eqref{p4-I7} and \eqref{p4-I6} in \eqref{p4-I67}, then taking the real part, we obtain \eqref{p4-4.51}.\\\linebreak
\textbf{Step 3:} In this step, we conclude the proof of \eqref{p4-4.37}. For this aim, inserting  \eqref{p4-4.51} in \eqref{p4-4.38}, then using Young's inequality and Lemma \ref{p4-1stlemps} with $\ell=4$, we deduce that
	\begin{equation}
\begin{array}{lll}
\displaystyle	l\rho_1 \int_{3\varepsilon}^{\beta-3\varepsilon} \f_4 |\la v^1|^2 dx+lk_3 \int_{3\varepsilon}^{\beta-3\varepsilon}\f_4 |v^1_x|^2dx=\displaystyle\Re\left\{\rho_1 \la^2\left(1-\frac{k_3}{k_1}\right) \int_{3\varepsilon}^{\beta-3\varepsilon}\f_4 v^5\overline{v^1_x}dx\right\}+\frac{o(1)}{\la^2}\vspace{0.25cm}\\
\displaystyle \leq \displaystyle\rho_1 \la^2 \left| 1-\frac{k_3}{k_1}\right|\int_{3\varepsilon}^{\beta-3\varepsilon}\f_4 |v^5||v^1_x|^2 dx +\frac{o(1)}{\la^2}\vspace{0.25cm}\\
\displaystyle = \displaystyle\int_{3\varepsilon}^{\beta-3\varepsilon} \left(\frac{\rho_1 \la^2}{\sqrt{lk_3}}\left|1-\frac{k_3}{k_1}\right|\sqrt{\f_4}|v^5|\right)\left(\sqrt{lk_3}\sqrt{\f_4}|v^1_x|\right)dx+\frac{o(1)}{\la^2}\vspace{0.25cm}\\
\leq  \displaystyle \underbrace{\frac{\rho_1^2}{2lk_3}\left(1-\frac{k_3}{k_1}\right)^2 \la^4 \int_{3\varepsilon}^{\beta-3\varepsilon}\f_4 |v^5|^2 dx}_{=o(\la^{-2})} +\frac{lk_3}{2}\int_{3\varepsilon}^{\beta-3\varepsilon}\f_4 |v^1_x|^2 dx +\frac{o(1)}{\la^2}.
\end{array}
\end{equation}	
Thus, from the above estimation, we deduce that
$$
	l\rho_1 \int_{3\varepsilon}^{\beta-3\varepsilon} \f_4 |\la v^1|^2 dx+\frac{lk_3}{2} \int_{3\varepsilon}^{\beta-3\varepsilon}\f_4 |v^1_x|^2dx=\frac{o(1)}{\la^2}.
$$
Finally, from the above estimation and the definition of $\f_4$, we obtain \eqref{p4-4.38}. The proof is thus complete. 
\end{proof}

\begin{lem}\label{p3-3rdlem}
	{\rm The solution $U=(v^1,v^2,v^3,v^4,v^5,v^6)^{\top}\in D(\AA)$ of  \eqref{p3-f1ps}-\eqref{p3-f6ps} satisfies the following estimations 
		\begin{equation}\label{p3-4.17**}
		\int_{3\varepsilon}^{\beta-3\varepsilon}\left|v^3_x\right|^2dx =o(1) \quad \text{and} \quad \int_{4\varepsilon}^{\beta-4\varepsilon}\left|\la v^3\right|^2dx =o(1) \ \ \text{if} \ \ \eqref{H1} \ \ \text{holds},
		\end{equation}
		\begin{equation}\label{p3-4.17*}
		\int_{4\varepsilon}^{\beta-4\varepsilon}\left|v^3_x\right|^2dx =o(1) \quad \text{and} \quad \int_{5\varepsilon}^{\beta-5\varepsilon}\left|\la v^3\right|^2dx =o(1) \ \ \ \text{if} \ \ \eqref{H2} \ \ \text{holds},
		\end{equation}
			\begin{equation}\label{p3-4.17}
		\int_{5\varepsilon}^{\beta-5\varepsilon}\left|v^3_x\right|^2dx =o(1) \quad \text{and} \quad \int_{6\varepsilon}^{\beta-6\varepsilon}\left|\la v^3\right|^2dx =o(1) \ \ \ \text{if} \ \ \eqref{H3} \ \ \text{holds}.
		\end{equation}
	}
\end{lem}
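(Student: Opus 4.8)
The engine of the proof is the second equation \eqref{p4-4.18**}, since it is the only one that carries $v^3$ together with $v^3_{xx}$; everything else is fed in from the interior smallness of $v^5,v^6,v^5_x$ (Lemmas \ref{p4-1stlemps}--\ref{p4-lem2}) and of $v^1_x,\la v^1$ (Lemmas \ref{p4-lem3}, \ref{p4-lem4}, \ref{p3-4thlemma}). As in the previous lemmas, each integral identity will be localized by one of the cut-off functions $\f_j$ and closed by integration by parts, the boundary terms vanishing because $\f_j$ (and, where needed, $\f_j'$) vanish at the endpoints of its support. This is exactly what produces the cascade of progressively smaller intervals in \eqref{p3-4.17**}--\eqref{p3-4.17}: each new cut-off layer costs one more $\eps$, and since the hypotheses \eqref{H1}, \eqref{H2}, \eqref{H3} have already consumed more layers in the preceding lemmas (up to $\f_4$ on $[4\eps,\beta-4\eps]$ under \eqref{H3}), the $v^3$-estimates start one layer further in as we pass from \eqref{H1} to \eqref{H3}.

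The first step is the estimate on $v^3_x$. Multiplying \eqref{p4-4.18**} by $\f_j\overline{v^3}$ over the support of $\f_j$, integrating the $k_2v^3_{xx}$ term by parts, and discarding the $f^3,f^4$ data gives, using $\|v^3\|=O(|\la|^{-1})$,
\[k_2\int\f_j|v^3_x|^2\,dx=\rho_2\int\f_j|\la v^3|^2\,dx-k_1\int\f_j(v^1_x+lv^5)\overline{v^3}\,dx+o(1),\]
where the last integral is $o(1)$ by Cauchy--Schwarz together with the boundedness of $v^1_x$, the smallness of $v^5$, and $\|v^3\|=O(|\la|^{-1})$. The obstacle is already visible here: a single multiplier against $\overline{v^3}$ only tells us that $\int\f_j|v^3_x|^2$ and $\int\f_j|\la v^3|^2$ are comparable, and no multiplier against the $v^3$-equation alone can separate them, so one of the two must be obtained by transferring information from $v^1$.

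This transfer is the main difficulty, and it is where the wave-speed condition enters. The plan is to couple \eqref{p4-4.18**} to the first equation \eqref{p4-4.18*} through the common shear term $v^1_x+v^3+lv^5$, combining the two identities so that the interior smallness of $v^1_x$ and $\la v^1$ (Lemmas \ref{p4-lem3}, \ref{p4-lem4}, \ref{p3-4thlemma}) and of $v^5,v^5_x$ (Lemmas \ref{p4-1stlemps}--\ref{p4-lem2}) is passed onto $v^3_x$. After the cut-off integrations by parts, the term that resists a direct bound is the mixed one $\int\f_j(\la^2 v^1)\overline{v^3_x}$, whose size is governed by the frequency weight available for $v^1$: under \eqref{H1} and \eqref{H2} the equal-speed relation $\frac{k_1}{\rho_1}=\frac{k_2}{\rho_2}$ is exactly what keeps this term $o(1)$ with the control of Lemmas \ref{p4-lem4} and \ref{p4-lem3}, whereas under \eqref{H3}, where this algebraic cancellation is lost and $\|v^1_{xx}\|=O(|\la|)$, one must spend the sharper bound $\int|\la v^1|^2=o(\la^{-2})$ of Lemma \ref{p3-4thlemma} to absorb the loss. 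This is precisely why \eqref{H3} forces the largest weight $\ell=4$ and the narrowest cut-offs in \eqref{p3-4.17}.

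Once the mixed term is shown to be $o(1)$, the comparability identity yields $\int|v^3_x|^2=o(1)$ on the larger interval. Finally I would feed this back into \eqref{p4-4.18**}: multiplying by $\f_{j+1}\overline{v^3}$ on the next, one-$\eps$-smaller cut-off and integrating by parts expresses $\rho_2\int\f_{j+1}|\la v^3|^2$ in terms of $\int\f_{j+1}|v^3_x|^2$ (now $o(1)$), the same coupling terms, and the data, which delivers $\int|\la v^3|^2=o(1)$. The only genuinely new work at each stage is the $\la$-bookkeeping in the mixed $\la^2 v^1\,\overline{v^3_x}$ term and the case split that follows from it; the remaining manipulations are the integrate-by-parts-and-quote-previous-lemmas routine already used in Lemmas \ref{p4-lem2}--\ref{p3-4thlemma}.
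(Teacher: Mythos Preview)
Your plan is correct and matches the paper's proof: the precise device is to multiply \eqref{p4-4.18*} by $\rho_1^{-1}\f_j\overline{v^3_x}$ and the conjugate of \eqref{p4-4.18**} by $\rho_2^{-1}\f_j v^1_x$, so that after integration by parts the dangerous $\la^2\int\f_j v^1\overline{v^3_x}$ contributions cancel and what survives is $\left(\tfrac{k_2}{\rho_2}-\tfrac{k_1}{\rho_1}\right)\int\f_j v^1_{xx}\overline{v^3_x}$, which vanishes under \eqref{H1}--\eqref{H2} and, under \eqref{H3}, is handled by one more integration by parts together with $\int|v^1_x|^2=o(\la^{-2})$ from Lemma~\ref{p3-4thlemma} (not the $|\la v^1|^2$ bound) against $\|v^3_{xx}\|=O(|\la|)$. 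The second estimate in each case is exactly your comparability identity, applied on the next smaller cut-off $\f_{j+1}$.
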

\begin{proof}
	For clarity, we divide the proof into four steps:\\\linebreak
	\textbf{Step 1:} In this step, we assume that \eqref{H1} or \eqref{H2} or \eqref{H3} holds and we will prove that:
	\begin{equation}\label{p3-4.23*}
	\begin{array}{lll}
	\displaystyle\frac{k_1}{\rho_1}\int_{\omega_j} \f_j \left|v^3_x\right|^2 dx =\left(\frac{k_2}{\rho_2}-\frac{k_1}{\rho_1}\right)\int_{\omega_j} \f_j v^1_{xx}\overline{v^3_x}dx
	+\la^2 \int_{\omega_j}\f_j^{\prime} v^1 \overline{v^3}dx+\frac{k_2}{\rho_2}\int_{\omega_j}\f_j^{\prime} v^1_{x}\overline{v^3_{x}}dx\vspace{0.25cm}\\
	\displaystyle+\, \frac{k_1}{\rho_2}\int_{\omega_j}\f_j  v^1_x (\overline{v^1_x} +\overline{v^3} +l\overline{v^5})dx
	-\frac{l}{\rho_1}(k_1+k_3)\int_{\omega_j}\f_j  v^5_x\overline{v^3_x}dx+o(1)
	\end{array}
	\end{equation}
	and 
	\begin{equation}\label{p4-4.23**}
	\rho_2\int_{\omega_j} \f_j \left|\la v^3\right|^2 dx=k_2 \int_{\omega_j}\f_j|v^3_x|^2 dx + o(1),
	\end{equation}
	where $\omega_j:=((j-1)\varepsilon,\beta+(1-j)\varepsilon)$ and $j\in\{1,\cdots,6\}$.
	For this aim, multiplying \eqref{p4-4.18*}  by $\rho_1^{-1}\f_j\overline{v^3_x}$, integrating over $\omega_j$, using integration by parts and the definition of $\f_j$, we obtain
	\begin{equation}\label{p4-4.21*}
	\begin{array}{lll}
	\displaystyle \frac{k_1}{\rho_1}\int_{\omega_j}\f_j \left|v^3_x\right|^2 dx =-\la^2 \int_{\omega_j}\f_j  v^1 \overline{v^3_x}dx-\frac{k_1}{\rho_1}\int_{\omega_j}\f_j  v^1_{xx}\overline{v^3_x}dx -\frac{l}{\rho_1}(k_1+k_3)\int_{\omega_j}\f_j  v^5_x\overline{v^3_x}dx   \vspace{0.25cm}\\ \displaystyle  +\,\frac{l^2 k_3}{\rho_1}\int_{\omega_j}\f_j  v^1\overline{v^3_x}dx -\rho_1 \la^{-\ell}\int_{\omega_j}\f_j f^2\overline{v^3_x}dx+i\rho_1 \la^{-\ell+1}\int_{\omega_j}\f_j f^1_x\overline{v^3}dx+i\rho_1 \la^{-\ell+1}\int_{\omega_j}\f_j^\prime f^1\overline{v^3}dx.
	\end{array}
	\end{equation}
	Using the fact that $v^3_x$ is uniformly bounded in $L^2(0,L)$, $\|v^1\|=O(|\la|^{-1})$, $\|v^3\|=O(|\la|^{-1})$, $\|f^1\|=o(1)$, $\|f^1_x\|=o(1)$ and $\|f^2\|=o(1)$, we get 
	\begin{equation}\label{p3-4.22}
	\begin{array}{lll}
	\displaystyle\frac{l^2 k_3}{\rho_1}\int_{\omega_j}\f_j  v^1\overline{v^3_x}dx=o(1), \ \ -\rho_1 \la^{-\ell}\int_{\omega_j}\f_j f^2\overline{v^3_x}dx=\frac{o(1)}{\la^{\ell}}, \ \ i\rho_1 \la^{-\ell+1}\int_{\omega_j}\f_j f^1_x\overline{v^3}dx=\frac{o(1)}{\la^{\ell}} \  \ \text{and}\vspace{0.25cm}\\
	\displaystyle
	i\rho_1 \la^{-\ell+1}\int_{\omega_j}\f_j^\prime f^1\overline{v^3}dx=\frac{o(1)}{\la^{\ell}}.
	\end{array}
	\end{equation}
	Inserting \eqref{p3-4.22} in \eqref{p4-4.21*} and using the fact that $\ell\geq 0$, we get 
	\begin{equation}\label{p3-4.23}
	\frac{k_1}{\rho_1}\int_{\omega_j} \f_j \left|v^3_x\right|^2 dx =-\la^2 \int_{\omega_j} \f_j v^1 \overline{v^3_x}dx-\frac{k_1}{\rho_1}\int_{\omega_j} \f_j v^1_{xx}\overline{v^3_x}dx-\frac{l}{\rho_1}(k_1+k_3)\int_{\omega_j}\f_j  v^5_x\overline{v^3_x}dx+o(1).
	\end{equation}
	Now, from \eqref{p4-4.18**}, we deduce that  
	\begin{equation}\label{p3-4.20}
	\la^2 \rho_2 \overline{v^3} +k_2 \overline{v^3_{xx}}-k_1 (\overline{v^1_x} +\overline{v^3}+l\overline{v^5})=-\rho_2 \la^{-\ell}\overline{f^4}+i\rho_2\la^{\ell-1}\overline{f^3}.
	\end{equation}
	Multiplying \eqref{p3-4.20} by $\rho_2^{-1}\f_jv^1_x$, integrating over $\omega_j$, using integration by parts and the definition of $\f_j$, then using the fact that $v^1_x$ is uniformly bounded in $L^2(0,L)$, $\|v^1\|=O(|\la|^{-1})$, $\|f^3\|=o(1)$, $\|f^3_x\|=o(1)$, $\|f^3\|=o(1)$, we obtain
	\begin{equation}
	\begin{array}{lll}
	\displaystyle	\la^2 \int_{\omega_j}\f_j v^1_x \overline{v^3 }dx +\frac{k_2}{\rho_2}\int_{\omega_j}\f_j v^1_{x}\overline{v^3_{xx}}dx -\frac{k_1}{\rho_2}\int_{\omega_j}\f_j v^1_x (\overline{v^1_x} +\overline{v^3} +l\overline{v^5})dx=\underbrace{-\rho_2 \la^{-\ell}\int_{\omega_j}\f_j\overline{f^4}v^1_xdx}_{=o(\la^{-\ell})}\vspace{0.25cm}\\
	\displaystyle	\underbrace{-i\rho_2\la^{1-\ell}\int_{\omega_j}\f_j\overline{f^3_x}v^1dx}_{=o(\la^{-\ell})}\ \ \underbrace{-\,i\rho_2\la^{1-\ell}\int_{\omega_j}\f_j^\prime\overline{f^3}v^1dx}_{=o(\la^{-\ell})}.
	\end{array}
	\end{equation}
	Using integration by parts to the first two terms in the above equation, we get 
	\begin{equation}\label{p3-4.25}
	\begin{array}{lll}
	\displaystyle-\la^2\int_{\omega_j}\f_j v^1 \overline{v^3_x }dx =\frac{k_2}{\rho_2}\int_{\omega_j}\f_j  v^1_{xx}\overline{v^3_{x}}dx+ \la^2 \int_{\omega_j}\f_j^{\prime} v^1 \overline{v^3}dx+\frac{k_2}{\rho_2}\int_{\omega_j}\f_j^{\prime} v^1_{x}\overline{v^3_{x}}dx\vspace{0.25cm}\\\displaystyle +\, \frac{k_1}{\rho_2}\int_{\omega_j}\f_j  v^1_x (\overline{v^1_x} +\overline{v^3} +l\overline{v^5})dx+o(\la^{-\ell}).
	\end{array}
	\end{equation}
	Inserting \eqref{p3-4.25} in \eqref{p3-4.23}, we obtain \eqref{p3-4.23*}.
	Next, multiplying \eqref{p3-4.20} by $\f_j v^3$, integrating over $\omega_j$, using integration by parts and the definition of $\f_j$ and the fact that $\|v^3\|=O(|\la|^{-1})$, $\|f^3\|=o(1)$ and $\|f^4\|=o(1)$, we get
	\begin{equation*}
	\rho_2 \int_{\omega_j} \f_j \left|\la v^3\right|^2 dx =k_2 \int_{\omega_j} \f_j |v^3_x|^2dx+k_2 \int_{\omega_j} \f_j^{\prime} \overline{v^3_{x}}v^3 dx +k_1 \int_{\omega_j} \f_j (\overline{v^1_x} +\overline{v^3}+l\overline{v^5})v^3 dx +o(\la^{-\ell}).
	\end{equation*}
	From the above estimation, the first estimation in \eqref{p3-4.17*} and the fact that $(v^1_x+v^3+lv^5)$, $v^3_x$ are uniformly bounded in $L^2 (0,L)$, $\|v^3\|=O(|\la|^{-1})$ and $\ell \geq 0$, we obtain  \eqref{p4-4.23**}.\\\linebreak
	\textbf{Step 2:} In this step, we assume that \eqref{H1} holds and we conclude the proof of  \eqref{p3-4.17**}. For this aim, take $j=3$ in \eqref{p3-4.23*} and using the fact that $\frac{k_1}{\rho_1}=\frac{k_2}{\rho_2}$, we get 
	\begin{equation}
	\begin{array}{lll}
	\displaystyle\frac{k_1}{\rho_1}\int_{2\varepsilon}^{\beta-2\varepsilon} \f_3 \left|v^3_x\right|^2 dx =
	\la^2 \int_{2\varepsilon}^{\beta-2\varepsilon} \f_3^{\prime} v^1 \overline{v^3}dx+\frac{k_2}{\rho_2}\int_{2\varepsilon}^{\beta-2\varepsilon} \f_3^{\prime} v^1_{x}\overline{v^3_{x}}dx\vspace{0.25cm}\\
	\displaystyle+\, \frac{k_1}{\rho_2}\int_{2\varepsilon}^{\beta-2\varepsilon} \f_3 v^1_x (\overline{v^1_x} +\overline{v^3} +l\overline{v^5})dx
	-\frac{l}{\rho_1}(k_1+k_3)\int_{2\varepsilon}^{\beta-2\varepsilon} \f_3  v^5_x\overline{v^3_x}dx+o(1).
	\end{array}
	\end{equation}
	Using Lemma \ref{p4-lem2} with $\ell=0$, Lemma \ref{p4-lem4}, the fact that $v^3_x$, $(v^1_x+v^3+lv^5)$ are uniformly bounded in $L^2(0,L)$ and $\|v^3\|=O(|\la|^{-1})$, and the definition of $\f_3$, we get the first estimation in \eqref{p3-4.17**}. Next, take $j=4$ in \eqref{p4-4.23**}, using the first estimation in \eqref{p3-4.17**} and the definition of $\f_4$, we obtain the second estimation in \eqref{p3-4.17**}.\\\linebreak
	\textbf{Step 3:} In this step, we assume that $\eqref{H2}$ holds and we conclude the proof of \eqref{p3-4.17*}. For this aim, take $j=4$ in \eqref{p3-4.23*} and using the fact that $\frac{k_1}{\rho_1}=\frac{k_2}{\rho_2}$, we get 
	\begin{equation}
	\begin{array}{lll}
	\displaystyle\frac{k_1}{\rho_1}\int_{3\varepsilon}^{\beta-3\varepsilon} \f_4 \left|v^3_x\right|^2 dx =
	\la^2 \int_{3\varepsilon}^{\beta-3\varepsilon} \f_4^{\prime} v^1 \overline{v^3}dx+\frac{k_2}{\rho_2}\int_{3\varepsilon}^{\beta-3\varepsilon} \f_4^{\prime} v^1_{x}\overline{v^3_{x}}dx\vspace{0.25cm}\\
	\displaystyle+\, \frac{k_1}{\rho_2}\int_{3\varepsilon}^{\beta-3\varepsilon} \f_4 v^1_x (\overline{v^1_x} +\overline{v^3} +l\overline{v^5})dx
	-\frac{l}{\rho_1}(k_1+k_3)\int_{3\varepsilon}^{\beta-3\varepsilon} \f_4 v^5_x\overline{v^3_x}dx+o(1).
	\end{array}
	\end{equation}
	Using  Lemma \ref{p4-lem2} with $\ell=2$, Lemma \ref{p4-lem3}, the fact that $v^3_x$, $(v^1_x+v^3+lv^5)$ are uniformly bounded in $L^2(0,L)$ and $\|v^3\|=O(|\la|^{-1})$, and the definition of $\f_4$, we get the first estimation in \eqref{p3-4.17*}. Next, take $j=5$ in \eqref{p4-4.23**}, using the first estimation in \eqref{p3-4.17*} and the definition of $\f_5$, we obtain the second estimation in \eqref{p3-4.17*}.\\\linebreak
	\textbf{Step 4:} In this step, we assume that \eqref{H3} holds and we conclude the proof of \eqref{p3-4.17}. For this aim, take $j=5$ in \eqref{p3-4.23*}, we get 
	\begin{equation}
	\begin{array}{lll}
	\displaystyle\frac{k_1}{\rho_1}\int_{4\varepsilon}^{\beta-4\varepsilon} \f_5 \left|v^3_x\right|^2 dx =\left(\frac{k_2}{\rho_2}-\frac{k_1}{\rho_1}\right)\int_{4\varepsilon}^{\beta-4\varepsilon} \f_5 v^1_{xx}\overline{v^3_x}dx
	+\la^2 \int_{4\varepsilon}^{\beta-4\varepsilon}\f_5^{\prime} v^1 \overline{v^3}dx+
	\la^2 \int_{4\varepsilon}^{\beta-4\varepsilon} \f_5^{\prime} v^1 \overline{v^3}dx\vspace{0.25cm}\\
	\displaystyle   +\,\frac{k_2}{\rho_2}\int_{4\varepsilon}^{\beta-4\varepsilon} \f_5^{\prime} v^1_{x}\overline{v^3_{x}}dx+ \frac{k_1}{\rho_2}\int_{4\varepsilon}^{\beta-4\varepsilon} \f_5 v^1_x (\overline{v^1_x} +\overline{v^3} +l\overline{v^5})dx
	-\frac{l}{\rho_1}(k_1+k_3)\int_{4\varepsilon}^{\beta-4\varepsilon} \f_5 v^5_x\overline{v^3_x}dx+o(1).
	\end{array}
	\end{equation}
		Using  Lemma \ref{p4-lem2} with $\ell=4$, Lemma \ref{p3-4thlemma}, the fact that $v^3_x$, $(v^1_x+v^3+lv^5)$ are uniformly bounded in $L^2(0,L)$ and $\|v^3\|=O(|\la|^{-1})$, and the definition of $\f_4$, we get
		$$
		\frac{k_1}{\rho_1}\int_{4\varepsilon}^{\beta-4\varepsilon} \f_5 \left|v^3_x\right|^2 dx =\left(\frac{k_2}{\rho_2}-\frac{k_1}{\rho_1}\right)\int_{4\varepsilon}^{\beta-4\varepsilon} \f_5 v^1_{xx}\overline{v^3_x}dx+o(1).
		$$
	Using integration by parts in the above equation and the fact that $\f_5 (4\varepsilon)=\f_5 (\beta-4\varepsilon)=0$, we get
		$$
	\frac{k_1}{\rho_1}\int_{4\varepsilon}^{\beta-4\varepsilon} \f_5 \left|v^3_x\right|^2 dx =\left(\frac{k_2}{\rho_2}-\frac{k_1}{\rho_1}\right)\int_{4\varepsilon}^{\beta-4\varepsilon} \f_5 v^1_{x}\overline{v^3_{xx}}dx+\left(\frac{k_2}{\rho_2}-\frac{k_1}{\rho_1}\right)\int_{4\varepsilon}^{\beta-4\varepsilon} \f_5^\prime v^1_{x}\overline{v^3_{x}}dx+o(1).
	$$
	From the above estimation, Lemma \ref{p3-4thlemma} and the fact that $v^3_x$ is uniformly bounded in $L^2(0,L)$, $\|v^3_{xx}\|=O(|\la|)$, and the definition of $\f_5$, we get the first estimation in \eqref{p3-4.17}. Finally, take $j=6$ in \eqref{p4-4.23**}, using the first estimation in \eqref{p3-4.17} and the definition of $\f_6$, we obtain the second estimation in \eqref{p3-4.17}. The proof is thus complete.
\end{proof}

	\begin{lem}\label{p4-lem7}
	{\rm
		The solution $U=(v^1,v^2,v^3,v^4,v^5,v^6)^{\top}\in D(\AA)$ of system \eqref{p3-f1ps}-\eqref{p3-f6ps} satisfies the following estimations
		\begin{equation}\label{p4-4.63}
		\mathsf{J}(4\varepsilon,\beta-4\varepsilon)=o(1) \qquad \text{if} \qquad  \eqref{H1} \ \ \text{holds},
		\end{equation}
		\begin{equation}\label{p4-4.64}
		\mathsf{J}(5\varepsilon,\beta-5\varepsilon)=o(1) \qquad \text{if} \qquad  \eqref{H2} \ \ \text{holds},
		\end{equation}
			\begin{equation}\label{p4-4.65}
		\mathsf{J}(6\varepsilon,\beta-6\varepsilon)=o(1) \qquad \text{if} \qquad  \eqref{H3} \ \ \text{holds},
		\end{equation}
		where 
		\begin{equation*}
		\begin{array}{lll}
		\displaystyle \mathsf{J}(\gamma_1,\gamma_2):=	\int_{0}^{\alpha_1} \left(\rho_1\left|\la v^1\right|^2+k_1\left|v^1_x \right|^2+\rho_2\left| \la v^3 \right|^2 +k_2 \left|v^3_x\right|^2 +k_3|v^5_x|^2 \right) dx   \vspace{0.25cm}\\
		\hspace{2cm}	\displaystyle
		+	\int_{\alpha_2}^{L} \left(\rho_1\left|\la v^1\right|^2+k_1\left|v^1_x \right|^2+\rho_2\left| \la v^3 \right|^2 +k_2 \left|v^3_x\right|^2+k_3 |v^5_x|^2 \right) dx +\rho_1\int_{\beta}^{L} \left|\la v^5\right|^2 dx,
		\end{array}
		\end{equation*}
		for all $0<\alpha_1<\alpha_2<\beta<L$.
	}
\end{lem}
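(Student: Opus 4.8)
The plan is to read the statement as the global energy-recovery step of the contradiction argument: the earlier lemmas control the six fields only on interior subintervals $(\gamma_1,\gamma_2)=(j\varepsilon,\beta-j\varepsilon)$ of the damped zone, and $\mathsf{J}(\gamma_1,\gamma_2)$ collects exactly the energy that is \emph{not} yet controlled --- the full $\varphi$- and $\psi$-energies together with the longitudinal gradient energy $k_3|v^5_x|^2$ on the two outer pieces $(0,\alpha_1)$ and $(\alpha_2,L)$, plus the longitudinal kinetic energy $\rho_1|\lambda v^5|^2$ on the undamped interval $(\beta,L)$. I would run a cut-off multiplier argument with $\mathsf{g}_1,\mathsf{g}_2$, arranging $(\alpha_1,\alpha_2)\subset(\gamma_1,\gamma_2)$ so that every term carrying a factor $\mathsf{g}_1'$ or $\mathsf{g}_2'$ is supported in the good band and is therefore $o(1)$ by Lemmas \ref{p4-1stlemps}--\ref{p3-3rdlem}. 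Since $\mathsf{g}_1\equiv1$ on $(0,\alpha_1)$ and $\mathsf{g}_2\equiv1$ on $(\alpha_2,L)$, the identities these cut-offs produce reconstruct precisely the integrals appearing in $\mathsf{J}$.

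I would recover the $\varphi$- and $\psi$-energies first, treating both outer pieces simultaneously. Multiplying \eqref{p4-4.18*} and \eqref{p4-4.18**} by $\mathsf{g}_j\overline{v^1}$ and $\mathsf{g}_j\overline{v^3}$ and integrating by parts yields the equipartition relations $\rho_1\lambda^2\int\mathsf{g}_j|v^1|^2=k_1\int\mathsf{g}_j|v^1_x|^2+o(1)$ and its $\psi$-analogue \eqref{p4-4.23**}, in which all coupling integrals are $o(1)$ because $\|v^1\|,\|v^3\|,\|v^5\|=O(|\lambda|^{-1})$, and all boundary contributions at $0$ and $L$ vanish through \eqref{p3-bc} since the multiplier is a field. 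To convert these balances into absolute smallness I would add Rellich-type multipliers $q_1(x)\overline{v^i_x}$ and $q_2(x)\overline{v^i_x}$, with $q_1$ vanishing at $x=0$ and supported left of $\alpha_2$, and $q_2$ vanishing at $x=L$ and supported right of $\alpha_1$; the identities they produce recover the gradient integrals $k_1\int|v^1_x|^2$, $k_2\int|v^3_x|^2$ (and the $v^5$ gradient) on $(0,\alpha_1)$ and $(\alpha_2,L)$ with a favorable sign, while their derivatives fall in the band $(\alpha_1,\alpha_2)$ and contribute only $o(1)$. Feeding the interior bounds of Lemmas \ref{p4-lem3}--\ref{p3-3rdlem} (or Lemma \ref{p4-lem4} under \eqref{H1}) into these identities then gives the full $\varphi$- and $\psi$-energies on $(0,\alpha_1)\cup(\alpha_2,L)$.

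The main obstacle is the undamped interval $(\beta,L)$. On $(0,\alpha_1)\cup(\alpha_2,\beta)$ the longitudinal kinetic energy is already controlled through \eqref{p3-f5ps} and Lemma \ref{p4-1stlemps}, so the one genuinely new quantity is $\rho_1\int_\beta^L|\lambda v^5|^2$, where $a(x)\equiv0$ and there is no dissipation to lean on. I expect to reach it only through the coupling: multiplying \eqref{p4-4.18***} by $\mathsf{g}_2\overline{v^5}$ produces the equipartition $\rho_1\lambda^2\int_{\alpha_2}^L\mathsf{g}_2|v^5|^2=k_3\int_{\alpha_2}^L\mathsf{g}_2|v^5_x|^2+(\text{coupling})+o(1)$, so once the longitudinal gradient energy $k_3\int_{\alpha_2}^L|v^5_x|^2$ and the $v^1$-energy have been recovered, the kinetic part on $(\beta,L)$ follows. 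The boundary flux at $x=L$ that the gradient multipliers would otherwise create is removed by choosing $q_2$ to vanish at $L$ (using $v^i(L)=0$), so that only interior and lower-order terms survive, and the residual cross-coupling integrals characteristic of the Bresse system are dispatched with $\|v^1\|,\|v^3\|,\|v^5\|=O(|\lambda|^{-1})$ and the interior bounds. Collecting the gradient and kinetic estimates over $(0,\alpha_1)$, $(\alpha_2,L)$ and $(\beta,L)$ gives $\mathsf{J}(\gamma_1,\gamma_2)=o(1)$; the successive shifts $4\varepsilon\to5\varepsilon\to6\varepsilon$ in \eqref{p4-4.63}--\eqref{p4-4.65} merely record how much deeper the weaker hypotheses \eqref{H1}, \eqref{H2}, \eqref{H3} force us into the good interval before the interior estimates become available.
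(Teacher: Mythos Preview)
Your overall scheme is close to the paper's, but it is both more complicated and, as written, contains a gap in the handling of the leading coupling terms.

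The paper's proof is a single Rellich identity: multiply \eqref{p4-4.18*}, \eqref{p4-4.18**}, \eqref{p4-4.18***} by $2\mathsf{h}\overline{v^1_x}$, $2\mathsf{h}\overline{v^3_x}$, $2\mathsf{h}\overline{v^5_x}$ respectively, with one common multiplier $\mathsf{h}\in C^1$ satisfying $\mathsf{h}(0)=\mathsf{h}(L)=0$, take real parts and \emph{add all three}. The point of summing is that the principal couplings
\[
\Re\Big\{2k_1\!\int_0^L\!\mathsf{h}\,v^3_x\overline{v^1_x}\Big\},\qquad
\Re\Big\{2l(k_1+k_3)\!\int_0^L\!\mathsf{h}\,v^5_x\overline{v^1_x}\Big\}
\]
cancel exactly against the mirror terms coming from the second and third equations. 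What survives is the clean identity $\int_0^L \mathsf{h}'(\rho_1|\lambda v^1|^2+k_1|v^1_x|^2+\rho_2|\lambda v^3|^2+k_2|v^3_x|^2+\rho_1|\lambda v^5|^2+k_3|v^5_x|^2)\,dx=o(1)$. Choosing $\mathsf{h}=x\mathsf{g}_1+(x-L)\mathsf{g}_2$ makes $\mathsf{h}'\equiv 1$ on $(0,\alpha_1)\cup(\alpha_2,L)$, and the remaining integral over $(\alpha_1,\alpha_2)$ is killed by the interior lemmas; the extra $\rho_1|\lambda v^5|^2$ over $(0,\alpha_1)\cup(\alpha_2,\beta)$ is absorbed by Lemma~\ref{p4-1stlemps}. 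In particular $\rho_1\int_\beta^L|\lambda v^5|^2$ drops out of the Rellich identity directly, with no separate equipartition step.

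Your proposal departs from this in two ways. First, the equipartition multipliers $\mathsf{g}_j\overline{v^i}$ are not needed here at all; the paper never uses them in this lemma. Second, and more seriously, you propose to recover the $\varphi$- and $\psi$-energies first and treat $w$ afterwards, claiming the cross-couplings are ``dispatched with $\|v^1\|,\|v^3\|,\|v^5\|=O(|\lambda|^{-1})$''. That bound handles only the lower-order couplings such as $\int q(v^3+lv^5)\overline{v^3_x}$. The leading terms $\Re\int q\,v^3_x\overline{v^1_x}$ and $\Re\int q\,v^5_x\overline{v^1_x}$ involve two gradients, are merely $O(1)$ on the outer pieces $(0,\alpha_1)\cup(\alpha_2,L)$, and are not covered by any interior bound there. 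They disappear only through the symmetric cancellation obtained by summing \emph{all three} Rellich identities simultaneously; if you separate $\varphi,\psi$ from $w$ as you describe, the term $l(k_1+k_3)\int q\,v^5_x\overline{v^1_x}$ is left uncontrolled and the argument does not close. Run the three multipliers together and the separate equipartition treatment of $(\beta,L)$ becomes unnecessary.
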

\begin{proof} We divide the proof into two steps :\\ \linebreak
	\textbf{Step 1:} Let $\h \in C^1([0,L])$ such that $\h (0)=\h(L)=0$. In this step, we assume that \eqref{H1} or \eqref{H2} or \eqref{H3} holds and we will prove that:
	\begin{equation}\label{p3-4.41}
		\intdx \h^{\prime} \left(\rho_1\left|\la v^1\right|^2+k_1\left|v^1_x \right|^2+\rho_2\left| \la v^3 \right|^2 +k_2 \left|v^3_x\right|^2+\rho_1 \left|\la v^5\right|^2+k_3\left| v^5_x \right|^2 \right) dx=o(1).
	\end{equation}
	For this aim, multiplying \eqref{p4-4.18*} by $2\h \overline{v^1_x}$, integrating over $(0,L)$, taking the real part, using integration by parts and the definition of $\h$, then using the fact that $v^1_x$ is uniformly bounded in $L^2(0,L)$, $\|v^1\|=O(|\la|^{-1})$, $\|f^1\|=o(1)$, $\|f^1_x\|=o(1)$ and $\|f^2\|=o(1)$, we obtain 
	\begin{equation}\label{p3-4.42*}
	\begin{array}{lll}
\displaystyle 	\intdx \h \left(\rho_1\left|\la v^1\right|^2+k_1\left|v^1_x \right|^2 \right)_x dx +\Re \left\{2k_1 \intdx \h v^3_x \overline{v^1_x}dx  \right\}+\Re\left\{2l(k_1+k_3)\intdx \h v^5_x \overline{v^1_x}dx \right\}\vspace{0.25cm}\\ \displaystyle
\underbrace{	-\Re\left\{2l^2k_3 \intdx \h v^1\overline{v^1_x}dx  \right\}}_{=o(1)}=\underbrace{\Re\left\{-\frac{\rho_1}{\la^{\ell}}\intdx \h f^2\overline{v^1_x}dx\right\}}_{=o(\la^{-\ell})}+\underbrace{\Re \left\{\frac{i\rho_1}{\la^{\ell-1}} \intdx \h f^1_x \overline{v^1}dx\right\}}_{=o(\la^{-\ell})}\vspace{0.25cm}\\
\displaystyle +\underbrace{\Re \left\{\frac{i\rho_1}{\la^{\ell-1}}\intdx \h^\prime f^1 \overline{v^1}dx\right\}}_{=o(\la^{-\ell})}.
	\end{array}
	\end{equation}
	Now, multiplying \eqref{p4-4.18**} by $2\h \overline{v^3_x}$, integrating over $(0,L)$, taking the real part, using integration by parts and the definition of $\h$, then using the fact that $v^3_x$ is uniformly bounded in $L^2(0,L)$, $\|v^3\|=O(|\la|^{-1})$, $\|v^5\|=O(|\la|^{-1})$, $\|f^3\|=o(1)$, $\|f^3_x\|=o(1)$ and $\|f^4\|=o(1)$, we obtain 
	 \begin{equation}\label{p3-4.43*}
	 \begin{array}{lll}
	 \displaystyle	\intdx \h \left(\left|\rho_2 \la v^3 \right|^2 +k_2 \left|v^3_x\right|^2\right)_x dx -\Re\left\{2k_1 \intdx \h v^1_x \overline{v^3_x}dx  \right\}\underbrace{-\Re \left\{2k_1\intdx\h (v^3+lv^5)\overline{v^3_x }dx  \right\}}_{=o(1)}\vspace{0.25cm}\\
	 \displaystyle =\underbrace{\Re\left\{-\frac{\rho_2}{\la^{\ell}}\intdx \h f^4\overline{v^3_x}dx\right\}}_{=o(\la^{-\ell})}+\underbrace{\Re \left\{\frac{i\rho_2}{\la^{\ell-1}} \intdx \h f^3_x \overline{v^3}dx\right\}}_{=o(\la^{-\ell})}+\underbrace{\Re \left\{\frac{i\rho_2}{\la^{\ell-1}}\intdx \h^\prime f^3 \overline{v^3}dx\right\}}_{=o(\la^{-\ell})}.
	 	\end{array}
	 \end{equation}
		Next multiplying \eqref{p4-4.18***} by $2\h \overline{v^5_x}$, integrating over $(0,L)$, taking the real part, using integration by parts and the definition of $\h$,  then using the definition of $a(x)$, Lemma \ref{p4-1stlemps}, the fact that $v^5_x$ is uniformly bounded in $L^2(0,L)$, $\|v^3\|=O(|\la|^{-1})$, $\|v^5\|=O(|\la|^{-1})$,  $\|f^5\|=o(1)$, $\|f^5_x\|=o(1)$ and $\|f^6\|=o(1)$, we obtain
	\begin{equation}\label{p3-4.47*}
\begin{array}{lll}
\displaystyle 	\intdx \h \left(\left|\rho_1 \la v^5 \right|^2 +k_3 \left|v^5_x\right|^2\right)_x dx -\Re \left\{2l(k_1+k_3)\intdx \h v^1_x \overline{v^5_x}dx \right\}\underbrace{-\Re \left\{2lk_1 \intdx \h (v^3+lv^5)\overline{v^5_x}dx  \right\}}_{=o(1)}\vspace{0.25cm}\\ 
\displaystyle \underbrace{-\Re \left\{a_0 \int_0^\beta \h v^6\overline{v^5_x}dx \right\}}_{=o\left(|\la|^{-\frac{\ell}{2}}\right)} =\underbrace{\Re\left\{-\frac{\rho_1}{\la^{\ell}}\intdx \h f^6\overline{v^5_x}dx\right\}}_{=o(\la^{-\ell})}+\underbrace{\Re \left\{\frac{i\rho_1}{\la^{\ell-1}} \intdx \h f^5_x \overline{v^5}dx\right\}}_{=o(\la^{-\ell})}\vspace{0.25cm}\\
\displaystyle +\underbrace{\Re \left\{\frac{i\rho_1}{\la^{\ell-1}}\intdx \h^\prime f^5 \overline{v^5}dx\right\}}_{=o(\la^{-\ell})}.
\end{array}
	\end{equation}
	Adding \eqref{p3-4.42*}, \eqref{p3-4.43*}, \eqref{p3-4.47*} and using the fact that $\ell \in \{0,2,4\}$, then using integration by parts, we obtain  \eqref{p3-4.41}.\\\linebreak 
	\textbf{Step 2:} In this step, we conclude the proof of Lemma \ref{p4-lem7}. For this aim, take $\h =x\q_1+(x-L)\q_2$ in \eqref{p3-4.41}, we obtain
	\begin{equation*}
	\begin{array}{lll}
	\displaystyle	\int_{0}^{\alpha_1} \left(\rho_1\left|\la v^1\right|^2+k_1\left|v^1_x \right|^2+\rho_2\left| \la v^3 \right|^2 +k_2 \left|v^3_x\right|^2 +k_3 |v^5_x|^2 \right) dx   \vspace{0.25cm}\\
	\displaystyle
	+	\int_{\alpha_2}^{L} \left(\rho_1\left|\la v^1\right|^2+k_1\left|v^1_x \right|^2+\rho_2\left| \la v^3 \right|^2 +k_2 \left|v^3_x\right|^2+k_3 \left|v^5_x\right|^2 \right) dx +\rho_1\int_{\beta}^{L} \left|\la v^5\right|^2 dx\vspace{0.25cm}\\ 
	\displaystyle = -\int_{\alpha_1}^{\alpha_2}\left(\q_1+x\q_1^{\prime}\right)\left(\rho_1\left|\la v^1\right|^2+k_1\left|v^1_x \right|^2+\rho_2\left| \la v^3 \right|^2 +k_2 \left|v^3_x\right|^2+k_3 \left| v^5_x\right|^2 \right) dx\vspace{0.25cm}\\
	\hspace{0.5cm}\displaystyle  -\int_{\alpha_1}^{\alpha_2}\left(\q_2+(x-L)\q_2^{\prime}\right)\left(\rho_1\left|\la v^1\right|^2+k_1\left|v^1_x \right|^2+\rho_2\left| \la v^3 \right|^2 +k_2 \left|v^3_x\right|^2+k_3 \left| v^5_x\right|^2 \right) dx\vspace{0.25cm}\\
	\hspace{0.5cm}\displaystyle +\rho_1\int_{0}^{\alpha_2} \q_1 |\la v^5|^2dx
	+\rho_1 \int_{\alpha_1}^{\beta} \q_2 |\la v^5|^2dx.
	\end{array}
	\end{equation*}
	Now, take $\alpha_1=4\varepsilon$ and $\alpha_2=\beta-4\varepsilon$ in the above equation, then using Lemmas \ref{p4-1stlemps}, \ref{p4-lem2}, \ref{p4-lem4} in case that \eqref{H1} holds and \eqref{p3-4.17**}, we obtain \eqref{p4-4.63}.
Next, take $\alpha_1=5\varepsilon$ and $\alpha_2=\beta-5\varepsilon$ in the above equation, then using Lemmas \ref{p4-1stlemps}-\ref{p4-lem3} in  case that \eqref{H2} holds and \eqref{p3-4.17*}, we obtain \eqref{p4-4.64}. Finally, take $\alpha_1=6\varepsilon$ and $\alpha_2=\beta-6\varepsilon$ in the above equation, then using Lemmas \ref{p4-1stlemps}, \ref{p4-lem2}, \ref{p3-4thlemma} in case that \eqref{H3} holds and \eqref{p3-4.17}, we obtain \eqref{p4-4.65}. The proof is thus complete.
\end{proof}
\\\linebreak
\textbf{Proof of Theorem \ref{exps}}. 
First, from  Lemmas \ref{p4-1stlemps}, \ref{p4-lem2}, \ref{p4-lem4}, \eqref{p3-4.17**}, and the fact that $\ell=0$, we obtain 
\begin{equation}\label{p3-4.46***}
\left\{\begin{array}{l}
\displaystyle{ \int_0^{\beta}\abs{v^6}^2dx=o(1),\ \int_0^{\beta}\abs{v^5}^2dx=o(\la^{-2}),\ \int_{\varepsilon}^{\beta-\varepsilon}}|v^5_x|^2 dx =o(1), \ \int_{2\varepsilon}^{\beta-2\varepsilon}|v^1_x|^2 dx =o(1)\vspace{0.25cm}\\
\displaystyle{\int_{2\varepsilon}^{\beta-2\varepsilon}\abs{\la v^1}^2dx=o(1), \ \int_{3\varepsilon}^{\beta-3\varepsilon}\abs{v^3_x}^2dx=o(1) \quad \text{and}\quad \int_{4\varepsilon}^{\beta-4\varepsilon}\abs{\la v^3}^2dx=o(1)}.
\end{array}
\right.
\end{equation}
From \eqref{p3-4.46***}, \eqref{p4-4.63} and the fact that $\displaystyle 0<\varepsilon<\frac{\beta}{12}$, we deduce that $\|U\|_\HH=o(1) $, which contradicts \eqref{p3-H-cond}. This implies that 
$$
\sup_{\la\in \R}\|(i\la I-\AA)^{-1}\|_{\mathcal{\HH}}=O\left(1\right).
$$ 
The proof is thus complete.\xqed{$\square$}
\\\linebreak
\textbf{Proof of Theorem \ref{p3-pol-eq}}. 
First, from  Lemmas \ref{p4-1stlemps}, \ref{p4-lem2}, \ref{p4-lem3}, \eqref{p3-4.17*}, and the fact that $\ell=2$, we obtain 
\begin{equation}\label{p3-4.46*}
\left\{\begin{array}{l}
\displaystyle{ \int_0^{\beta}\abs{v^6}^2dx=o(\la^{-2}),\ \int_0^{\beta}\abs{v^5}^2dx=o(\la^{-4}),\ \int_{\varepsilon}^{\beta-\varepsilon}|v^5_x|^2 dx =o(\la^{-2})},\ \int_{2\varepsilon}^{\beta-2\varepsilon}|v^1_x|^2 dx =o(1)\vspace{0.25cm}\\
\displaystyle{\int_{3\varepsilon}^{\beta-3\varepsilon}\abs{\la v^1}^2dx=o(1), \ \int_{4\varepsilon}^{\beta-4\varepsilon}\abs{v^3_x}^2dx=o(1) \quad \text{and}\quad \int_{5\varepsilon}^{\beta-5\varepsilon}\abs{\la v^3}^2dx=o(1)}.
\end{array}
\right.
\end{equation}
From \eqref{p3-4.46*}, \eqref{p4-4.64} and the fact that $\displaystyle 0<\varepsilon<\frac{\beta}{12}$, we deduce that $\|U\|_\HH=o(1) $, which contradicts \eqref{p3-H-cond}. This implies that 
$$
\sup_{\la\in \R}\|(i\la I-\AA)^{-1}\|_{\mathcal{\HH}}=O\left(\la^2\right).
$$ 
The proof is thus complete.\xqed{$\square$}
\\\linebreak
\textbf{Proof of Theorem \ref{p3-pol-neq}}. 
First, from  Lemmas \ref{p4-1stlemps}, \ref{p4-lem2}, \ref{p3-4thlemma}, \eqref{p3-4.17}, and the fact that $\ell=4$, we obtain 
\begin{equation}\label{p3-4.46**}
\left\{\begin{array}{l}
\displaystyle{ \int_0^{\beta}\abs{v^6}^2dx=o(\la^{-4}),\ \int_0^{\beta}\abs{v^5}^2dx=o(\la^{-6}), \ \int_{\varepsilon}^{\beta-\varepsilon}|v^5_x|^2 dx =o(|\la|^{-3}), \  \int_{4\varepsilon}^{\beta-4\varepsilon}| v^1_x|^2 dx =o(\la^{-2})}\vspace{0.25cm}\\
\displaystyle{\int_{4\varepsilon}^{\beta-4\varepsilon}\abs{ \la v^1}^2dx=o(\la^{-2}), \ \int_{5\varepsilon}^{\beta-5\varepsilon}\abs{v^3_x}^2dx=o(1) \quad \text{and}\quad \int_{6\varepsilon}^{\beta-6\varepsilon}\abs{\la v^3}^2dx=o(1)}.
\end{array}
\right.
\end{equation}
From \eqref{p3-4.46**}, \eqref{p4-4.65} and the fact that $\displaystyle 0<\varepsilon<\frac{\beta}{12}$, we deduce that $\|U\|_\HH=o(1) $, which contradicts \eqref{p3-H-cond}. This implies that 
$$
\sup_{\la\in \R}\|(i\la I-\AA)^{-1}\|_{\mathcal{\HH}}=O\left(\la^4\right).
$$ 
The proof is thus complete.\xqed{$\square$}
	
\section{Conclusion}
We have studied  the stabilization of a Bresse system with discontinuous local viscoelastic damping of Kelvin-Voigt type acting in the longitudinal displacement under fully Dirichlet boundary conditions. We proved the strong stability of the system. We established the exponential stability of the solution if and only if the three waves have the same speed of propagation (i.e., $\frac{k_1}{\rho_1}=\frac{k_2}{\rho_2}$ and $k_1=k_3$). On the contrary,   we proved that the energy of our system  decays polynomially with the rates
\begin{equation*}
\left\{	\begin{array}{lll}
\displaystyle	t^{-1} \quad \text{if} \quad \frac{k_1}{\rho_1}=\frac{k_2}{\rho_2} \ \ \text{and} \ \ k_1\neq k_3,\vspace{0.15cm}\\
\displaystyle 		t^{-\frac{1}{2}} \quad \text{if} \quad \frac{k_1}{\rho_1}\neq\frac{k_2}{\rho_2}.
\end{array}
\right.
\end{equation*}
Moreover, it would be interesting to study system \eqref{p3-sysorig}-\eqref{p3-initialcond} with local internal frictional damping, in other words, by only assuming that $a$ is positive on a non empty subinterval of $(0,L)$ that could be away from the boundary.

\end{document}